
\documentclass{amsart}
\usepackage{graphicx}
\usepackage{amscd}
\usepackage{amsmath}
\usepackage{amsfonts}
\usepackage{amssymb}
\newtheorem{theorem}{Theorem}
\theoremstyle{plain}

\newtheorem{corollary}{Corollary}

\newtheorem{definition}{Definition}

\newtheorem{lemma}{Lemma}

\newtheorem{proposition}{Proposition}
\newtheorem{remark}{Remark}

\numberwithin{equation}{section}

\begin{document}
\title[Self Improving Sobolev-Poincar\'{e} Inequalities]{Self Improving Sobolev-Poincar\'{e} Inequalities, Truncation and Symmetrization}
\author{Joaquim Martin$^{\ast}$}
\address{Department of Mathematics\\
Universitat Aut\`onoma de Barcelona}
\email{jmartin@mat.uab.cat}
\author{Mario Milman}
\address{Department of Mathematics\\
Florida Atlantic University}
\email{extrapol@bellsouth.net}
\urladdr{http://www.math.fau.edu/milman}
\thanks{2000 Mathematics Subject Classification Primary: 46E30, 26D10.}
\thanks{$^{\ast}$ Supported in part by MTM2007-60500 and by CURE 2005SGR00556}
\thanks{This paper is in final form and no version of it will be submitted for
publication elsewhere.}
\keywords{Sobolev-Poincar\'{e} inequalities, self-improving, truncation, symmetrization}

\begin{abstract}
In \cite{MMP} we developed a new method to obtain symmetrization inequalities
of Sobolev type for functions in $W_{0}^{1,1}(\Omega)$. In this paper we
extend our method to Sobolev functions that do not vanish at the boundary.
\end{abstract}\maketitle

\section{Introduction}

In our recent paper \cite{MMP} we developed a new principle of
``symmetrization by truncation'' to obtain symmetrization inequalities of
Sobolev type via truncation. In this note we consider the corresponding
results for Sobolev spaces on domains, without assuming that the Sobolev
functions vanish at the boundary.

The explicit connection between Sobolev-Poincar\'{e} inequalities and
isoperimetric inequalities appears in the work of Maz'ya. In \cite{Ma} it is
shown that if $\Omega\subset\mathbb{R}^{n}$ is an arbitrary open set with
finite volume, $1\leq p\leq n/(n-1),$ then the Sobolev-Poincar\'{e}
\begin{equation}
\left(  \int_{\Omega}\left|  f(x)-f_{\Omega}\right|  ^{p}dx\right)  ^{1/p}\leq
C\int_{\Omega}\left|  \nabla f(x)\right|  dx,\;\forall f\in W^{1,1}(\Omega),
\label{tres}%
\end{equation}
$(f_{\Omega}=\frac{1}{\left|  \Omega\right|  }\int_{\Omega}f)$ holds if and
only if the following $p-$isoperimetric inequality is satisfied: there exists
a constant $M\in(0,\left|  \Omega\right|  )$ such that
\begin{equation}
U_{1/p}(M)=\sup\frac{\left|  \mathcal{S}\right|  ^{1/p}}{s(\partial
\mathcal{S})}<\infty, \label{tresa}%
\end{equation}
where the sup is taken over all $\mathcal{S}$ open bounded subsets of $\Omega$
such that $\Omega\cap\partial\mathcal{S}$ is a manifold of class $C^{\infty}$
and $\left|  \mathcal{S}\right|  \leq M,$ and $s\ $denotes the $(n-1)-$%
dimensional area. If (\ref{tresa}) is satisfied we shall say that $\Omega$
belongs to the Maz'ya class $\mathcal{J}_{1/p}.$ For example, if $\Omega$ is a
bounded domain, starshaped with respect to a ball, or having the cone
property, or $\Omega$ is a Lipschitz domain, then $\Omega$ belongs to the
class $\mathcal{J}_{1-1/n};$ if $\Omega$ is a $s-$John domain then $\Omega
\in\mathcal{J}_{(n-1)s/n};$ if $\Omega$ is a domain with one $\beta-$cusp then
it belong to the Mazy'a class $\mathcal{J}_{\frac{\beta(n-1)}{\beta(n-1)+1}}$
(cf. \cite{Ma}, \cite{BK}).

Sobolev-Poincar\'{e} inequalities are known to self improve. For example, if
(\ref{tres}) holds for $p=\frac{n}{n-1},$ then (cf. \cite[Theorem 2.4.1]{Zi})
the inequality
\[
\left(  \int_{\Omega}\left|  f(x)-f_{\Omega}\right|  ^{pn/(n-p)}dx\right)
^{\frac{n-p}{np}}\leq C\left(  \int_{\Omega}\left|  \nabla f(x)\right|
^{p}dx\right)  ^{1/p},
\]
holds for $1<p<n.$ More generally, if $\left|  \Omega\right|  <\infty$, and if
inequality (\ref{tres}) holds for a fixed $p,$ $1\leq p\leq n/(n-1),$ then
\begin{equation}
\left(  \int_{\Omega}\left|  f(x)-f_{\Omega}\right|  ^{s}dx\right)  ^{1/s}\leq
C\left(  \int_{\Omega}\left|  \nabla f(x)\right|  ^{q}dx\right)  ^{1/q},
\label{cuatro}%
\end{equation}
where $q<p/(p-1)$ and $s=pq/(p+q-pq).$ In particular, in some sense, ``all''
$L^{p}$ Sobolev-Poincar\'{e} inequalities follow from the Sobolev-Poincar\'{e}
inequality (\ref{tres}) or, equivalently, from a suitable version of an
isoperimetric inequality.

As is well known, the sharp versions of these $L^{p}$ inequalities fall
outside the $L^{p}$ scale and need to be formulated using $L(p,q)$ spaces.
Recently (cf. \cite{BMR}, \cite{MP}, \cite{MM}), we have shown that using a
simple modification of the definition of the $L(p,q)$ spaces we also obtain
the ``best'' results including the problematic borderline inequalities.
Moreover, these sharper limiting results cannot be obtained using, for
example, the usual extrapolations from the $L^{p}$ inequalities but require
new sharp symmetrization inequalities.

More generally, symmetrization inequalities play a fundamental role in the
study of Sobolev-Poincar\'{e} inequalities in the general setting of
rearrangement invariant spaces. In our program we formulate self improving
properties of Sobolev-Poincar\'{e} inequalities in terms of symmetrization
inequalities. In this fashion instead of showing that a particular inequality
implies other inequalities one case at a time, we aim to prove a
symmetrization inequality that implies ``all'' other Sobolev-Poincar\'{e}
inequalities. One difficulty in dealing with rearrangement inequalities on
domains is that the usual inequalities are only valid for certain range of the
values of the variable. For example, suppose that for some $1<p\leq n/(n-1),$
the Sobolev-Poincar\'{e} inequality (\ref{tres}) holds, then (cf. \cite{MM}),
\begin{equation}
f^{\ast\ast}(t)-f^{\ast}(t)\leq Ct^{1-1/p}\left|  \nabla f\right|  ^{\ast\ast
}(t),\ t\in(0,\left|  \Omega\right|  /2),\text{ }f\in W^{1,1}(\Omega
),\label{sime00}%
\end{equation}
where $f^{\ast\ast}(t)=\frac{1}{t}\int_{0}^{t}f^{\ast}(s)ds.$ However in
\cite{MM} we show that if we work with symmetrization inequalities of
``Sobolev-Poincar\'{e}'' type (i.e. inequalities where $f$ is replaced by
$f-f_{\Omega}$) then we can eliminate the restriction $t\in(0,\left|
\Omega\right|  /2)$ in (\ref{sime00})$.$ Indeed, under the assumption that
(\ref{tres}) holds for some $1<p\leq n/(n-1),$ we showed in \cite{MM} that,
for all $f\in W^{1,1}(\Omega),$ we have
\begin{equation}
\inf_{c\in\mathbb{R}}\left(  \left(  f-c\right)  ^{\ast\ast}(t)-\left(
f-c\right)  ^{\ast}(t)\right)  \leq C_{\Omega}t^{1-1/p}\left|  \nabla
f\right|  ^{\ast\ast}(t),\text{\ a.e. }t\in(0,\left|  \Omega\right|
).\label{sime01}%
\end{equation}

Notice that (\ref{sime01}) implies that for any r.i. space $X(0,\left|
\Omega\right|  ),$ with upper Boyd\footnote{The restriction on the Boyd
indices is only required to guarantee that the inequality $\left\|
g^{\ast\ast}\right\|  _{X}\leq c_{X}\left\|  g\right\|  _{X},$ holds for all
$g\in X.$} index $\beta_{X}<1$, we have (cf. \cite{MM})
\[
\inf_{c\in\mathbb{R}}\left\|  t^{1/p-1}\left(  f-c\right)  ^{\ast\ast
}(t)-\left(  f-c\right)  ^{\ast}(t)\right\|  _{X}\leq C\left\|  \nabla
f\right\|  _{X},
\]
where $C=C(n,\left|  \Omega\right|  ,X).$ For example, if $X=L^{q}$, $q>1$,
$q<\frac{p}{p-1},$ and $s=pq/(p+q-pq),$ then
\[
\left\|  f-f_{\Omega}\right\|  _{L^{s,q}(\Omega)}\leq C\left\|  \nabla
f\right\|  _{L^{q}(\Omega)},\text{ \ \ }\forall f\in W^{1,q}(\Omega).
\]
Since $L^{s,q}(\Omega)\subset L^{s}(\Omega),$ for $s>q,$ this last inequality
is the well known (optimal) improvement of (\ref{cuatro}). Moreover, in the
limiting case $q=\frac{p}{p-1},$ then $s=\infty$ and we obtain
\begin{equation}
\inf_{c\in\mathbb{R}}\left\|  f-c\right\|  _{L^{\infty,q}(\Omega)}\leq
C\left\|  \nabla f\right\|  _{L^{q}(\Omega)},\label{agregada}%
\end{equation}
where
\[
L^{\infty,q}(\Omega)=\left\{  f:\left\|  f\right\|  _{L^{\infty,q}(\Omega
)}^{q}=\int_{0}^{\left|  \Omega\right|  }\left(  f^{\ast\ast}(t)-f^{\ast
}(t)\right)  ^{q}\frac{dt}{t}<\infty\right\}  .
\]
Once again since $L(\infty,q)(\Omega)\subset BW^{q}(\Omega)\subset
e^{L^{q^{\prime}}}(\Omega)$ (see \cite{BMR}) we see that (\ref{agregada}) is a
sharpening of the classical limiting inequalities of
Brezis-Wainger-Hansson-Maz'ya-Trudinger. It follows that if we redefine the
$L(p,q)$ spaces, $1\leq p\leq\infty,$ $1\leq q\leq\infty,$ using
\[
\left\|  f\right\|  _{L^{p,q}(\Omega)}^{q}=\int_{0}^{\left|  \Omega\right|
}\left(  f^{\ast\ast}(t)-f^{\ast}(t)\right)  ^{q}t^{q/p}\frac{dt}{t},
\]
then we have an attractive unified way to formulate the sharp form of the
Sobolev-Poincar\'{e} inequalities, namely
\begin{equation}
\inf_{c\in\mathbb{R}}\left\|  f-c\right\|  _{L^{s,q}(\Omega)}\leq C\left\|
\nabla f\right\|  _{L^{q}(\Omega)},1<q\leq
p/(p-1),s=pq/(q+p-pq).\label{agregada1}%
\end{equation}
One possible objection to (\ref{agregada1}) is that the important case $q=1$
is excluded$.$ The cause for this imperfection is the presence of the ``double
star'' operation on right hand side of (\ref{sime01})$.$ On the other hand,
(\ref{sime01}), for $q=1,$ readily implies
\begin{equation}
\left\|  f-f_{\Omega}\right\|  _{L^{p,\infty}(\Omega)}\leq C\left\|  \nabla
f\right\|  _{L^{1}(\Omega)},\label{agregada2}%
\end{equation}
and therefore, by the truncation principle of Maz'ya (cf. \cite{Ha}), we can
see that (\ref{agregada2}) self-improves to (\ref{tres}) and even to the
sharper form of the Gagliardo-Nirenberg inequality (cf. \cite{KO}),
\[
\left\|  f-f_{\Omega}\right\|  _{L^{p,1}(\Omega)}\leq C\left\|  \nabla
f\right\|  _{L^{1}(\Omega)}.
\]

The ad-hoc argument that we needed to cope with the limiting case suggested to
us that one should be able to find a sharpening of the symmetrization
inequality (\ref{sime01}) that would imply ``all'' the Sobolev-Poincar\'{e}
inequalities directly. In the case of functions vanishing at the boundary of
$\Omega$ we have shown that this is indeed the case in \cite{MMP}. One of the
objectives of this paper is to formulate the correspoding inequalities without
assuming that the Sobolev functions vanish at the boundary. Our first result
is the following

\begin{theorem}
\label{teoA}Let $\Omega$ be a domain of finite measure (for simplicity we
assume from now on that $\left|  \Omega\right|  =1),$ and let $1\leq p\leq
n/(n-1).$ Then the following statements are equivalent

(i)
\begin{equation}
\left(  \int_{\Omega}\left|  f(x)-f_{\Omega}\right|  ^{p}dx\right)
^{1/p}\preceq\int_{\Omega}\left|  \nabla f(x)\right|  dx,\text{ }\forall f\in
W^{1,1}(\Omega). \label{teoA00}%
\end{equation}
(ii) For each $f\in W^{1,1}(\Omega)$ there exists $r_{f}\in\mathbb{R}$ such
that
\begin{equation}
s^{\frac{1}{p}-1}[\left(  f-r_{f}\right)  ^{\ast\ast}(s)-\left(
f-r_{f}\right)  ^{\ast}(s)]\preceq\int_{0}^{t}\left|  \nabla f\right|  ^{\ast
}(s)ds, \label{teoA20}%
\end{equation}
and
\begin{equation}
\int_{0}^{t}s^{\frac{1}{p}-1}[\left(  f-r_{f}\right)  ^{\ast\ast}(s)-\left(
f-r_{f}\right)  ^{\ast}(s)]ds\preceq\int_{0}^{t}\left|  \nabla f\right|
^{\ast}(s)ds. \label{teoA2}%
\end{equation}
(iii) For any r.i. space\footnote{For a rearrangement invariant space (r.i.
space) $X(\Omega)$ we let $\hat{X}=\hat{X}(0,1)$ be its representation as a
function space on $(0,1)$ (if $X(\Omega)=L^{p}(\Omega)$ we shall write $L^{p}$
instead of $\hat{L}^{p}).$ We refer to \cite{BS} for further information about
r.i. spaces.} $X(\Omega)$ and for each $f\in W_{X}^{1}(\Omega)=\{f\in
X(\Omega):\nabla f\in X(\Omega)\},$ we have
\begin{equation}
\inf_{c\in\mathbb{R}}\left\|  s^{\frac{1}{p}-1}[\left(  f-c\right)  ^{\ast
\ast}(s)-\left(  f-c\right)  ^{\ast}(s)\right\|  _{\hat{X}}\preceq\left\|
\nabla f\right\|  _{X(\Omega)}. \label{final}%
\end{equation}
(iv)
\[
\left\|  f-f_{\Omega}\right\|  _{L^{p,1}(\Omega)}\preceq\left\|  \nabla
f\right\|  _{L^{1}(\Omega)},\text{ }\forall f\in W^{1,1}(\Omega).
\]
\end{theorem}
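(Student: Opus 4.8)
The plan is to establish the cycle of implications $(i)\Rightarrow(ii)\Rightarrow(iii)\Rightarrow(iv)\Rightarrow(i)$, exploiting the symmetrization-by-truncation machinery from \cite{MMP} together with the truncation principle of Maz'ya. The conceptual heart of the matter is $(i)\Rightarrow(ii)$: one must upgrade the scalar Sobolev--Poincar\'e inequality \eqref{teoA00} to the pointwise rearrangement estimates \eqref{teoA20} and \eqref{teoA2}, and the right constant $r_f$ should be the unique median (or a level value) of $f$ rather than $f_\Omega$.

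First I would recall that by Maz'ya's theorem, \eqref{teoA00} is equivalent to the isoperimetric inequality \eqref{tresa}, i.e.\ $\Omega\in\mathcal J_{1/p}$, so that for all sets $\mathcal S$ with $|\mathcal S|\le 1/2$ one has $|\mathcal S|^{1/p}\preceq s(\partial\mathcal S)$. Then, for a fixed $f\in W^{1,1}(\Omega)$, I would choose $r_f$ to be a median of $f$ and apply the truncation method of \cite{MMP} to the truncations of $f-r_f$ at dyadic levels: the Maz'ya--Federer coarea formula and the isoperimetric inequality give, for the decreasing rearrangement of $g=f-r_f$, an estimate of the form $(-g^{\ast})'(t)\,t^{1/p}\preceq (\text{perimeter term})$ on $(0,1/2)$ and symmetrically on the negative side, and summing the truncation contributions converts this into the differential inequality governing $g^{\ast\ast}-g^\ast$. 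Concretely, using the identity $-(t g^{\ast\ast}(t))' = g^{\ast\ast}(t)-g^\ast(t)$ and integrating, the pointwise bound \eqref{teoA20} follows from the isoperimetric inequality applied at each level set of the truncations, while \eqref{teoA2} is its integrated (Calder\'on-type) version, obtained by integrating \eqref{teoA20} in $s$ from $0$ to $t$ and using Fubini together with $\int_0^t|\nabla f|^\ast(s)\,ds$ being concave. The nontrivial point is that working with $f-r_f$ (median-centered), rather than $f-f_\Omega$, removes the restriction $t\in(0,|\Omega|/2)$ present in \eqref{sime00}, exactly as in the passage from \eqref{sime00} to \eqref{sime01}.

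For $(ii)\Rightarrow(iii)$ I would simply take the $\hat X$-norm in \eqref{teoA20}, noting that the right-hand side $\int_0^t|\nabla f|^\ast(s)\,ds = |\nabla f|^{\ast\ast}(t)\cdot t$ and that $\||\nabla f|^{\ast\ast}\|_{\hat X}\preceq \|\nabla f\|_X$ is automatic when the upper Boyd index of $X$ is less than $1$; but since the claimed \eqref{final} is stated for \emph{any} r.i.\ space, I would instead invoke the integrated inequality \eqref{teoA2}, which by a standard Calder\'on/Hardy-type argument (the operator $g\mapsto$ (the quantity on the left) is dominated in the Calder\'on sense by $g\mapsto |\nabla f|$) yields the norm bound \eqref{final} on every r.i.\ space without any restriction on Boyd indices — this is precisely the advantage of having both \eqref{teoA20} and \eqref{teoA2} rather than a single pointwise inequality. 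For $(iii)\Rightarrow(iv)$ I would specialize $X=L^1$ and identify $\inf_c\|s^{1/p-1}[(f-c)^{\ast\ast}-(f-c)^\ast]\|_{L^1(0,1)}$ with (a constant times) $\|f-f_\Omega\|_{L^{p,1}(\Omega)}$ in the modified $L(p,q)$ scale defined in the introduction, using that the infimum over $c$ is comparable to the value at $c=f_\Omega$ up to the Poincar\'e inequality itself; and for $(iv)\Rightarrow(i)$ I would use $L^{p,1}(\Omega)\hookrightarrow L^p(\Omega)$ together with the truncation principle of Maz'ya (cf.\ \cite{Ha}) to close the loop.

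The main obstacle I anticipate is the careful bookkeeping in $(i)\Rightarrow(ii)$: one must verify that summing the truncated-level estimates is uniform in $f$ (so that the constant depends only on $n$, $|\Omega|$, and the isoperimetric constant $U_{1/p}$), and that the median choice $r_f$ genuinely eliminates the half-measure restriction on both $(0,1/2)$ and $(1/2,1)$ simultaneously — the two one-sided estimates must be glued at $t=1/2$, where the behavior of $g^{\ast\ast}-g^\ast$ is controlled precisely because $g$ is median-centered. A secondary technical point is justifying the Calder\'on-domination step in $(ii)\Rightarrow(iii)$ in full generality, for which I would appeal to the calculus of the $K$-functional for the couple $(L^1, L^\infty)$ as developed in \cite{MMP}.
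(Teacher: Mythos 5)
Your plan follows the same cycle of implications and the same symmetrization-by-truncation strategy as the paper, and the overall scaffolding is sound. The genuine gap is in $(ii)\Rightarrow(iii)$, and it sits exactly where the paper's technical contribution lies. Your phrase ``standard Calder\'on/Hardy-type argument'' conceals a real difficulty: inequality \eqref{teoA2} controls $\int_0^t g(s)\,ds$ where $g(s)=s^{1/p-1}[(f-r_f)^{\ast\ast}(s)-(f-r_f)^\ast(s)]$, which is \emph{not} a decreasing function, so this is not the same as controlling $\int_0^t g^\ast(s)\,ds$ — and the Calder\'on/Hardy majorization principle for r.i.\ norms requires the latter. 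Passing from ``\eqref{teoA20} \emph{and} \eqref{teoA2} together'' to ``$\int_0^t g^\ast(s)\,ds\preceq\int_0^t |\nabla f|^\ast(s)\,ds$ for all $t$'' is the content of a specific lemma (Lemma~\ref{01}, extracted from Cianchi \cite{Ci}), whose proof is a nontrivial interval-combinatorics argument given in the appendix. That lemma is precisely what lets one drop the upper Boyd index restriction present in \cite{MMP}; invoking \eqref{teoA2} alone, or calling the step routine, does not close it. You do sense that both estimates are needed — ``this is precisely the advantage of having both'' — but you need to make the mechanism explicit rather than declare it standard.

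Two smaller points. For $(iv)\Rightarrow(i)$ the truncation principle of Maz'ya is superfluous here: since (iv) already has $\|\nabla f\|_{L^1}$ on the right, the embedding $L^{p,1}(\Omega)\subset L^p(\Omega)$ gives (i) immediately (truncation would be needed only if (iv) had a weak-type left side such as $L^{p,\infty}$). And in $(iii)\Rightarrow(iv)$, passing from $\inf_c\|f-c\|_{L^{p,1}}$ to $\|f-f_\Omega\|_{L^{p,1}}$ uses only the elementary comparison $\|f-f_\Omega\|\le 2\inf_c\|f-c\|$, valid in any r.i.\ space on a probability space because $|f_\Omega-c|\le\|f-c\|_{L^1}\le\|f-c\|_X$; the Sobolev--Poincar\'e inequality itself plays no role in that step. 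Your $(i)\Rightarrow(ii)$ plan — median-centering, splitting into the two signed parts, truncating, passing to the differential inequality $s^{1/p}(-(f-r_f)^\ast)'(s)\preceq\partial_s\int_{\{|f-r_f|>(f-r_f)^\ast(s)\}}|\nabla f|$, and integrating — matches the paper's Theorem~\ref{perplejo}, though the paper works directly with the truncated Sobolev--Poincar\'e inequality and the elementary Lemma~\ref{ttt} rather than passing explicitly through the isoperimetric inequality \eqref{tresa}, which is a small shortcut.
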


As usual, the symbol $f\simeq g$ will indicate the existence of a universal
constant $C>0$ (independent of all parameters involved) so that $(1/C)f\leq
g\leq C\,f$, while the symbol $f\preceq g$ means that for a suitable constant
$C,$ $f\leq C\,g,$ and likewise $f\succeq g$ means that $f\geq Cg.$

We note that Theorem \ref{teoA} improves on Theorem 1 of \cite{MMP} in three
respects: (i) we do not assume that the Sobolev functions vanish at the
boundary, (ii) in (\ref{final}) we have eliminated the restriction on the Boyd
index of $X$ we had in \cite{MMP} (this is due to our use of Lemma \ref{01}
below), and finally (iii) in \cite{MMP} we only considered the limiting case
$p=\frac{n}{n-1}.$

In our second main result we show that for $p=\frac{n}{n-1},$ Theorem
\ref{teoA} is sharp in the setting of r.i. spaces, and moreover that the
verification of Sobolev-Poincar\'{e} inequalities is reduced to establish the
boundedness of a certain one-dimensional Hardy type operator acting on
functions defined on $(0,1).$ Interestingly this reduction is not possible for
$p\neq\frac{n}{n-1}$ (see Proposition \ref{Har} below).

\begin{theorem}
\label{teoB}Let $\Omega$ be a domain with $\left|  \Omega\right|  =1,$ and let
$X(\Omega),$ $Y(\Omega)$ be two r.i. spaces$.$ Assume that the following
Sobolev-Poincar\'{e} inequality holds
\begin{equation}
\left(  \int_{\Omega}\left|  f(x)-f_{\Omega}\right|  ^{n/(n-1)}dx\right)
^{\frac{n-1}{n}}\preceq\int_{\Omega}\left|  \nabla f(x)\right|  dx,\text{
}\forall f\in W^{1,1}(\Omega). \label{poi}%
\end{equation}
Then the following statements are equivalent

(i)
\[
\left\|  f\right\|  _{\hat{Y}}\preceq\left\|  s^{-1/n}[f^{\ast\ast}%
(s)-f^{\ast}(s)]\right\|  _{\hat{X}}+\left\|  f\right\|  _{L^{1}}.
\]
(ii)
\[
\left\|  \int_{t}^{1}s^{1/n}f(s)\frac{ds}{s}\right\|  _{\hat{Y}}%
\preceq\left\|  f\right\|  _{\hat{X}},\ \forall f\in\hat{X},\text{ }f\geq0.
\]
(iii)
\[
\left\|  f-f_{\Omega}\right\|  _{Y(\Omega)}\preceq\left\|  \nabla f\right\|
_{X(\Omega)}.
\]
\end{theorem}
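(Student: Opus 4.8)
The plan is to prove \thmref{teoB} as a cycle of implications $(iii)\Rightarrow(i)\Rightarrow(ii)\Rightarrow(iii)$, exploiting the fact that under hypothesis (\ref{poi}) we are in the borderline case $p=n/(n-1)$, so that the exponent $1/p-1$ in \thmref{teoA} equals $-1/n$ and the inner Hardy-type averaging on the right of (\ref{sime01})/(\ref{teoA2}) can be handled by the one-dimensional operator $Tf(t)=\int_t^1 s^{1/n}f(s)\,ds/s$. First I would record the key symmetrization input: by \thmref{teoA} applied with $p=n/(n-1)$, the Sobolev-Poincar\'e hypothesis (\ref{poi}) is equivalent to the pointwise inequality $s^{-1/n}[(f-r_f)^{\ast\ast}(s)-(f-r_f)^{\ast}(s)]\preceq\int_0^t|\nabla f|^{\ast}(s)\,ds$ together with its integrated version; and since $|\Omega|=1$ and $\int_0^1|\nabla f|^{\ast}=\|\nabla f\|_{L^1}$, this already gives the bound $\|\,s^{-1/n}[(f-r_f)^{\ast\ast}-(f-r_f)^{\ast}]\,\|_{\hat X}\preceq\|\nabla f\|_{X(\Omega)}$ for \emph{every} r.i. space $X$ (using \lemref{01}, as in the remark after \thmref{teoA}). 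This is the engine that drives the whole equivalence.

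For $(iii)\Rightarrow(i)$: given the symmetrization estimate just quoted and (iii), I would chain them — start from the right side of (i), use the hypothesis of (i) with the choice $c=r_f$ (or rather observe that since $(f-c)^{\ast\ast}-(f-c)^{\ast}$ is independent of translating $f$ by a constant, we may write $s^{-1/n}[f^{\ast\ast}(s)-f^{\ast}(s)]=s^{-1/n}[(f-r_f)^{\ast\ast}(s)-(f-r_f)^{\ast}(s)]$), bound the $\hat X$ norm by $\|\nabla f\|_{X(\Omega)}$ via \thmref{teoA}, and bound $\|f\|_{L^1}$ trivially; but one must be careful — (i) as stated controls $\|f\|_{\hat Y}$ by the functional, so to deduce (iii) one applies (i) to $f-f_\Omega$ and uses $\|f-f_\Omega\|_{L^1}\preceq\|\nabla f\|_{L^1}\preceq\|\nabla f\|_{X}$. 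So really (i) plus \thmref{teoA} gives (iii) directly, and the genuinely new content is the equivalence of (i) with the Hardy inequality (ii).

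For the core equivalence $(i)\Leftrightarrow(ii)$: the idea is that the map $f\mapsto$ (the equivalence class of $f$ modulo constants) can be ``reconstructed'' from the quantity $g(s):=s^{-1/n}[f^{\ast\ast}(s)-f^{\ast}(s)]$ via an explicit Hardy operator. Concretely, one has the elementary identity $f^{\ast\ast}(t)-f^{\ast}(t)=-t\frac{d}{dt}f^{\ast\ast}(t)$, hence $\frac{d}{dt}f^{\ast\ast}(t)=-\frac1t[f^{\ast\ast}(t)-f^{\ast}(t)]=-t^{-1+1/n}g(t)\cdot t^{-1/n}\cdot\ldots$ — more cleanly, integrating from $t$ to $1$: $f^{\ast\ast}(t)-f^{\ast\ast}(1)=\int_t^1\frac{f^{\ast\ast}(s)-f^{\ast}(s)}{s}\,ds=\int_t^1 s^{1/n}g(s)\frac{ds}{s}=Tg(t)$, where $f^{\ast\ast}(1)=\int_0^1 f^{\ast}=\int_\Omega f$ up to the normalization $|\Omega|=1$. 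Thus $f^{\ast}(t)\le f^{\ast\ast}(t)=Tg(t)+f_\Omega$, which shows $\|f-f_\Omega\|_{\hat Y}\preceq\|(f-f_\Omega)^{\ast\ast}\|_{\hat Y}=\|Tg\|_{\hat Y}$ (using $\beta_{\hat Y}<1$ — wait, here one should be cautious, since $Y$ is arbitrary; instead, since $f^{\ast}\le f^{\ast\ast}$ pointwise and $f^{\ast\ast}(t)-f_\Omega\le Tg(t)$ when $f^{\ast\ast}(t)\ge f_\Omega$, one gets $\|(f-f_\Omega)^+\|_{\hat Y}\preceq\|Tg\|_{\hat Y}$, and a symmetric argument applied to $-f$ handles the other part). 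This gives $(ii)\Rightarrow(i)$ after adding back $\|f\|_{L^1}$. Conversely, for $(i)\Rightarrow(ii)$, given a nonnegative $f\in\hat X$ one constructs a genuine function $u$ on $\Omega$ whose rearrangement realizes (or dominates) the profile so that $s^{-1/n}[u^{\ast\ast}(s)-u^{\ast}(s)]\simeq f(s)$ and $\|u\|_{\hat Y}\gtrsim\|Tf\|_{\hat Y}$ — the standard device is to take $u^{\ast}$ to be an explicit solution of the ODE $-t(u^{\ast})'(t)=t^{1/n}f(t)$, i.e. $u^{\ast}(t)=\int_t^1 s^{1/n}f(s)\frac{ds}{s}=Tf(t)$, a nonincreasing function on $(0,1)$, and realize it as the decreasing rearrangement of a radial function on $\Omega$ (or directly on $(0,1)$). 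Then $u^{\ast\ast}-u^{\ast}\ge0$ and a computation shows $s^{-1/n}[u^{\ast\ast}(s)-u^{\ast}(s)]\preceq f(s)$ so $\|s^{-1/n}[u^{\ast\ast}-u^{\ast}]\|_{\hat X}\preceq\|f\|_{\hat X}$, while $\|u\|_{\hat Y}=\|Tf\|_{\hat Y}$ and $\|u\|_{L^1}=\int_0^1 Tf\preceq\|f\|_{\hat X}$ (the last by a dual Hardy estimate, which holds since $\hat X$ contains $L^\infty$ on the finite interval $(0,1)$, or by Fubini $\int_0^1 Tf\,dt=\int_0^1 s^{1/n}f(s)\frac{ds}{s}\cdot s=\int_0^1 s^{1/n}f(s)\,ds\le\|f\|_{\hat X}\|s^{1/n}\|_{\hat X'}$). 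Feeding this $u$ into (i) yields $\|Tf\|_{\hat Y}\preceq\|f\|_{\hat X}$, which is (ii).

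The main obstacle I expect is the careful handling of the constant/mean-zero normalization and the $L^1$ correction term throughout — in particular making the reconstruction step rigorous (one needs $f^{\ast\ast}$ to be absolutely continuous and to control $f^{\ast\ast}(1)=f_\Omega$, and one must verify that the nonincreasing function $Tf(t)$ actually arises as the decreasing rearrangement of an honest Sobolev-type function on $\Omega$ so that (i), which is stated for functions on $\Omega$, can be applied). A secondary subtlety is that (i) bounds $\|f\|_{\hat Y}$ rather than $\inf_c\|f-c\|_{\hat Y}$, so in the direction producing (ii) one should arrange the test function to already have the right ``mean'', or absorb the discrepancy into the $\|f\|_{L^1}$ term; and conversely in $(i)\Rightarrow(iii)$ one must not lose the infimum over constants — applying (i) to $f-f_\Omega$ and controlling $\|f-f_\Omega\|_{L^1}$ by $\|\nabla f\|_{L^1}$ (itself a consequence of (\ref{poi}) with the trivial embedding $L^{n/(n-1)}\hookrightarrow L^1$ on a set of measure $1$) is the clean way to close the loop.
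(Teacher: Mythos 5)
Your $(i)\Rightarrow(ii)$ construction (take $h(t)=\int_t^1 s^{1/n}|f(s)|\,ds/s$, note $h$ is decreasing so $h^*=h$, compute $h^{**}(t)-h^*(t)=\frac1t\int_0^t s^{1/n}|f(s)|\,ds$, feed into (i), close with the Hardy-type bound $\|t^{-\alpha}\frac1t\int_0^t g\|_{\hat X}\le\frac1\alpha\|g\|_{\hat X}$) and your $(ii)\Rightarrow(i)$ via the fundamental-theorem-of-calculus identity $f^{**}(t)=\int_t^1[f^{**}(s)-f^*(s)]\,ds/s+\int_0^1 f^*$ are precisely what the paper does, and your instinct that the worry about $\beta_{\hat Y}<1$ is spurious is correct: one only needs the universal inequality $\|f\|_{\hat Y}\le\|f^{**}\|_{\hat Y}$, not the reverse Hardy bound. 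However, the proposal has two genuine gaps.

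\emph{The cycle is not closed.} You announce $(iii)\Rightarrow(i)\Rightarrow(ii)\Rightarrow(iii)$, but what you actually sketch is $(i)\Leftrightarrow(ii)$ and $(i)\Rightarrow(iii)$ -- nowhere is statement $(iii)$ ever taken as a hypothesis. Your test-function step builds $u$ on $\Omega$ with $u^*=Tf$ and then ``feeds $u$ into (i)''; but (i) is a pure rearrangement inequality on $(0,1)$ that makes no reference to gradients or to $\Omega$, so this step proves only the redundant $(i)\Rightarrow(ii)$. The step that is actually missing -- and that the paper supplies -- is $(iii)\Rightarrow(ii)$: one must build a genuine Sobolev function $u(x)=\int_{\gamma_n|x|^n}^1 g(s)s^{1/n-1}\,ds$ (with $g$ supported in a small ball to keep $u\in W^{1,1}(\Omega)$), compute $u^*(t)=\int_t^1 s^{1/n}g(s)\,ds/s$ \emph{and} $|\nabla u|(x)=n\,g(\gamma_n|x|^n)$, and then apply $(iii)$ so that $\|\nabla u\|_{X(\Omega)}\simeq\|g\|_{\hat X}$ appears on the right. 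Your own remark that ``(i) is stated for functions on $\Omega$'' signals the confusion: it is $(iii)$, not $(i)$, that involves $\nabla f$ and hence requires a bona fide function on $\Omega$.

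\emph{The claim that $(f-c)^{**}-(f-c)^*$ is independent of $c$ is false.} Decreasing rearrangement is not additive, so translating $f$ by a constant changes $(f-c)^{**}-(f-c)^*$ in an essential way -- indeed this is the entire reason the theorems carry an $\inf_c$. In your $(i)\Rightarrow(iii)$ argument you apply (i) to $f-f_\Omega$ and must then control $\|s^{-1/n}[(f-f_\Omega)^{**}-(f-f_\Omega)^*]\|_{\hat X}$; Theorem~\ref{teoA} only controls the infimum over $c$ of this quantity (equivalently, its value at the near-optimal $r_f$, the median), and there is no reason for $c=f_\Omega$ to be near-optimal. The correct route, as in the paper's $(ii)\Rightarrow(iii)$, is to apply the estimate to $f-r_f$ for an $r_f$ that (nearly) minimizes the $\hat X$-functional, handle $\|f-r_f\|_{L^1}$ separately via the Poincar\'e inequality \eqref{poi}, and only at the very end pass from $\inf_c\|f-c\|_{Y}$ to $\|f-f_\Omega\|_{Y}$ by the elementary comparison $\|f-f_\Omega\|_{Y}\preceq\inf_c\|f-c\|_{Y}$. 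A minor but related slip: $f^{**}(1)=\int_0^1 f^*=\|f\|_{L^1}$, not $f_\Omega$, which is why the correction term in (i) is $\|f\|_{L^1}$ rather than anything involving the mean.
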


Finally, we also consider suitable variants of the Polya-Sz\"{e}go
symmetrization principle in a formulation that does not require the functions
to vanish at the boundary

\begin{theorem}
\label{teo3}(cf. Theorem \ref{Tpolya} below) \ Let $\Omega\in$ $\mathcal{J}%
_{1-1/n}$, and let $X(\Omega)$ be a r.i. space. Then
\[
\inf_{c\in\mathbb{R}}\left\|  \nabla(f-c)^{\circ}\right\|  _{\tilde{X}%
(B)}\preceq\left\|  \nabla f\right\|  _{X(\Omega)},\text{ for all }f\in
W^{1,1}(\Omega),
\]
where $f^{\circ}$ is the symmetric spherical decreasing rearrangement of
$f$\textbf{\ }and\textbf{ }$\tilde{X}(B)$ is the version of $X(\Omega)$ on a
ball $B$ centered at zero with measure 1 (see Section \ref{sec4} below).
\end{theorem}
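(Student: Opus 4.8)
The plan is to reduce the Polya--Szegő-type inequality on $\Omega$ to the symmetrization inequality for functions on the ball, using the isoperimetric hypothesis $\Omega\in\mathcal{J}_{1-1/n}$ together with the truncation machinery that underlies Theorem~\ref{teoA}. First I would recall that $\Omega\in\mathcal{J}_{1-1/n}$ is precisely the condition making (\ref{tres}) hold for $p=n/(n-1)$, so by Theorem~\ref{teoA} we have at our disposal the pointwise rearrangement inequality
\[
\inf_{c\in\mathbb{R}}\Bigl(s^{-1/n}\bigl[(f-c)^{\ast\ast}(s)-(f-c)^{\ast}(s)\bigr]\Bigr)\preceq\int_{0}^{t}|\nabla f|^{\ast}(s)\,ds,\qquad f\in W^{1,1}(\Omega),
\]
and, more importantly, the underlying oscillation estimate. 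The key point is that for the radial rearrangement $(f-c)^{\circ}$ on the ball $B$ of measure $1$, the gradient is itself radial and one has the exact identity $|\nabla(f-c)^{\circ}|^{\ast}(s)\simeq s^{-1/n}\bigl[(f-c)^{\ast\ast}(s)-(f-c)^{\ast}(s)\bigr]'$-type control; more precisely, writing $u=(f-c)^{\circ}$ on $B$, its decreasing rearrangement coincides with that of $f-c$, and the radial derivative of $u$ at the sphere enclosing measure $s$ equals (up to the dimensional isoperimetric constant $c_n = n\omega_n^{1/n}$) the quantity $-s^{1/n-1}\frac{d}{ds}\bigl((f-c)^{\ast}(s)\bigr)\cdot$(Jacobian factor), so that $|\nabla u|^{\ast}(s)\simeq s^{1-1/n}\bigl(-\frac{d}{ds}(f-c)^{\ast}(s)\bigr)$.

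The heart of the argument is then to bound this last expression by $\int_{0}^{s}|\nabla f|^{\ast}$ in the appropriate averaged sense, and here is where the truncation principle enters, exactly as in \cite{MMP}: one applies the Maz'ya-type truncation to $f$ at two levels and uses (\ref{tres}) on the truncated functions to produce, for a suitable choice $c=r_f$, the estimate
\[
\int_{0}^{t} s^{1-1/n}\Bigl(-\frac{d}{ds}(f-r_f)^{\ast}(s)\Bigr)\,ds \;=\; \int_{0}^{t}|\nabla u|^{\ast}(s)\,ds \;\preceq\; \int_{0}^{t}|\nabla f|^{\ast}(s)\,ds,
\]
i.e. $|\nabla(f-r_f)^{\circ}|^{\ast\ast}(t)\preceq|\nabla f|^{\ast\ast}(t)$. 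Equivalently this is the Calderón-type majorization $|\nabla(f-r_f)^{\circ}|\prec\prec|\nabla f|$ in the Hardy--Littlewood--Pólya order. Once this Hardy-order domination is established, passing to an arbitrary r.i. space $\tilde{X}(B)$ is immediate: by the Calderón--Mityagin theorem every r.i. norm is monotone with respect to $\prec\prec$, so
\[
\bigl\|\nabla(f-r_f)^{\circ}\bigr\|_{\tilde{X}(B)} \preceq \bigl\|\,|\nabla f|^{\ast\ast}\,\bigr\|_{\hat{X}}
\]
and one finishes by noting that on a space of the same total measure $\|g^{\ast\ast}\|_{\hat X}$ is comparable to $\|g\|_X$ — or, to avoid any Boyd-index restriction, one keeps the $\prec\prec$ statement and invokes the stronger fact that $\int_0^t |\nabla(f-r_f)^\circ|^\ast \le C\int_0^t|\nabla f|^\ast$ already yields $\|\nabla(f-r_f)^\circ\|_{\tilde X(B)}\preceq\|\nabla f\|_{X(\Omega)}$ directly by the Hardy--Littlewood--Pólya principle, with no double-star losses. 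Taking the infimum over $c$ (which is at least as small as the value at $c=r_f$) gives the claim.

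The main obstacle I anticipate is the first, geometric-analytic step: establishing the pointwise comparison $|\nabla(f-r_f)^{\circ}|^{\ast\ast}(t)\preceq|\nabla f|^{\ast\ast}(t)$ for functions that do \emph{not} vanish on $\partial\Omega$. For $f\in W_0^{1,1}$ this is classical Pólya--Szegő; the non-vanishing case is exactly what forces the appearance of the constant $r_f$ and the use of \emph{two-sided} truncation (truncating $f$ both from above and below around the median/level $r_f$), since the rearrangement of $f-r_f$ mixes the super-level and sub-level structure of $f$. One must check carefully that the coarea formula and the isoperimetric inequality (\ref{tresa}) apply to the truncated pieces — whose level sets $\{|f-r_f|>\lambda\}$ have measure $\le M$ for $\lambda$ large, which is where the condition $|\mathcal{S}|\le M$ in the Maz'ya class is used, and a standard localization/partition-of-the-range argument handles the remaining measure. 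A secondary technical point is justifying the differentiation of $(f-r_f)^{\ast}$ and the identification of $|\nabla(f-r_f)^\circ|^\ast$ with $c_n^{-1}s^{1-1/n}(-(f-r_f)^{\ast})'(s)$; this is routine once one works on the regularized (mollified) approximations of $f$ and passes to the limit, so I expect it to cause no real difficulty beyond bookkeeping.
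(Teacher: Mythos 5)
The overall plan matches the paper's: use truncation to establish, for a suitable median-type constant $r_f$, a Hardy--Littlewood--P\'olya domination $\int_0^t |\nabla(f-r_f)^\circ|^\ast\,ds \preceq \int_0^t |\nabla f|^\ast\,ds$, and then pass to arbitrary r.i.\ norms by Calder\'on--Mityagin monotonicity. But there is a genuine error at the crucial step. Write $v(s) = c_n\, s^{1-1/n}\bigl(-\,(f-r_f)^\ast\bigr)'(s)$ and $u=(f-r_f)^\circ$. The measure-preserving change of variables $s=\gamma_n|x|^n$ shows that $v$ on $(0,1)$ and $|\nabla u|$ on $B$ are \emph{equimeasurable}, hence $v^\ast = |\nabla u|^\ast$; but $v$ itself is \emph{not} decreasing in general (it is the product of the increasing factor $s^{1-1/n}$ with a nonmonotone factor $-(f-r_f)^{\ast\prime}$). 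Consequently your claimed identifications $|\nabla u|^\ast(s)\simeq s^{1-1/n}\bigl(-(f-r_f)^\ast\bigr)'(s)$ and $\int_0^t s^{1-1/n}\bigl(-(f-r_f)^\ast\bigr)'(s)\,ds = \int_0^t |\nabla u|^\ast(s)\,ds$ are false. In fact $\int_0^t v\,ds \le \int_0^t v^\ast\,ds = \int_0^t |\nabla u|^\ast\,ds$, so the quantity you control via truncation (which is exactly Theorem~\ref{perplejo}(a) with $p=n/(n-1)$) is the \emph{smaller} of the two; the estimate does not propagate to $\int_0^t |\nabla u|^\ast$, and the conclusion does not follow from what you proved.

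The paper closes this gap by upgrading the truncation output from the linear Hardy estimate $\int_0^t v \preceq \int_0^t |\nabla f|^\ast$ to an Orlicz-type domination: a minor modification of the pointwise inequality (\ref{pol01}) yields $\int_0^1 \Phi\bigl(s^{1-1/n}(-(f-r_f)^\ast)'(s)\bigr)\,ds \preceq \int_\Omega \Phi(|\nabla f|)\,dx$ for \emph{every} Young function $\Phi$. Since $\int_0^1 \Phi(v)\,ds = \int_B \Phi(|\nabla u|)\,dx$ by equimeasurability (a quantity that sees only $v^\ast$, unlike $\int_0^t v$), one obtains $\int_B \Phi(|\nabla(f-r_f)^\circ|) \preceq \int_\Omega \Phi(|\nabla f|)$ for all Young $\Phi$, which by the classical Hardy--Littlewood--P\'olya majorization theorem is equivalent to the desired $\int_0^t |\nabla(f-r_f)^\circ|^\ast\,ds \preceq \int_0^t |\nabla f|^\ast\,ds$. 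From there your closing steps (r.i.\ monotonicity under the Hardy order, then infimum over $c$) are fine. So the strategy is sound, but you must carry the Young-function formulation through the truncation argument; the linear Hardy inequality you invoked cannot be converted into a statement about $|\nabla u|^\ast$ because the passage from $v$ to $v^\ast$ goes in the wrong direction.
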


Using Theorem \ref{teo3}, and the characterization of the $X-$modulus of
continuity as a $K-$functional (cf. \cite{hs}), it follows as in \cite{MM1} that

\begin{theorem}
\label{teo4}Let $\Omega$ be an open domain in $\mathbb{R}^{n}$ with Lipschitz
boundary with $\left|  \Omega\right|  =1,$ and let $X\left(  {\Omega}\right)
$ be a r.i. space. Then for all $f\in X\left(  {\Omega}\right)  ,$%
\[
\inf_{c\in\mathbb{R}}\omega_{\tilde{X}(B)}(\left(  f-c\right)  ^{\circ
},t)\preceq\omega_{X(\Omega)}(f,t),
\]
where $\omega_{X(\Omega)}(f,t)$ is the $X$-modulus of continuity of $f$ (see
(\ref{modulo}) below).
\end{theorem}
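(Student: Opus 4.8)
The plan is to derive Theorem~\ref{teo4} from Theorem~\ref{teo3} by recognizing the $X$-modulus of continuity as a $K$-functional and then transferring the symmetrization inequality through that identification. First I would recall from \cite{hs} the equivalence
\[
\omega_{X(\Omega)}(f,t)\simeq K\bigl(t,f;X(\Omega),W^{1,X}_{\ast}(\Omega)\bigr)
\]
for an appropriate homogeneous Sobolev-type couple, so that computing the modulus of $f$ amounts to splitting $f=g+h$ with $g\in X(\Omega)$ and $h$ having gradient in $X(\Omega)$, balancing $\|g\|_{X(\Omega)}+t\|\nabla h\|_{X(\Omega)}$. The $\inf_c$ in the statement is exactly what makes the second term a seminorm insensitive to constants, matching the homogeneous Sobolev seminorm appearing in the $K$-functional. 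I would make this precise for a Lipschitz domain, where $\Omega\in\mathcal J_{1-1/n}$ so Theorem~\ref{teo3} applies.

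The main step is then: given $t>0$ and a near-optimal decomposition $f=g+h$ on $\Omega$, I would pass to spherical rearrangements. For the ``regular'' part $h$, Theorem~\ref{teo3} gives a constant $c_h$ with $\|\nabla(h-c_h)^{\circ}\|_{\tilde X(B)}\preceq\|\nabla h\|_{X(\Omega)}$; for the ``bad'' part $g$, I would use that rearrangement is a contraction on r.i.\ spaces (the Hardy--Littlewood--P\'olya principle, since $\|f^{\circ}\|_{\tilde X(B)}=\|f\|_{X(\Omega)}$ because both are built from the same $f^{\ast}$ on $(0,1)$), hence $\|g^{\circ}\|_{\tilde X(B)}=\|g\|_{X(\Omega)}$. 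The delicate point is that $f^{\circ}\neq g^{\circ}+h^{\circ}$ in general, so I cannot split the rearrangement additively. To get around this I would instead use the subadditivity/quasi-additivity of rearrangement under the relevant $K$-functional: writing $c=c_h$, one has $(f-c)^{\circ}$ controlled, up to the $K$-functional level, by $g^{\circ}$ together with $(h-c_h)^{\circ}$, because $(f-c)^{\ast}(s)\le g^{\ast}(s/2)+(h-c_h)^{\ast}(s/2)$ pointwise. This pointwise rearrangement inequality, combined with the boundedness of dilations on the r.i.\ space $\tilde X(B)$ (which holds with no Boyd-index restriction for the bound $f^{\ast}(\cdot/2)\preceq$), lets me estimate
\[
K\bigl(t,(f-c)^{\circ};\tilde X(B),W^{1,\tilde X}_{\ast}(B)\bigr)\preceq \|g^{\circ}\|_{\tilde X(B)}+t\,\|\nabla(h-c_h)^{\circ}\|_{\tilde X(B)}\preceq \|g\|_{X(\Omega)}+t\,\|\nabla h\|_{X(\Omega)}.
\]
Taking the infimum over admissible decompositions $f=g+h$ on the right and over $c$ on the left, and re-invoking the $K$-functional characterization on the ball $B$ (again \cite{hs}), yields $\inf_c\omega_{\tilde X(B)}((f-c)^{\circ},t)\preceq\omega_{X(\Omega)}(f,t)$.

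The hardest part will be the bookkeeping around the constants $c$: the constant $c_h$ furnished by Theorem~\ref{teo3} depends on $h$, hence on $t$ through the decomposition, so I must be careful that the single $\inf_c$ on the left of Theorem~\ref{teo4} can absorb a $t$-dependent choice — which it can, since for each fixed $t$ we are free to pick the best $c$, and the estimate above produces one explicit admissible choice $c=c_h$. A secondary technical point is verifying that the P\'olya--Sz\H{o}go-type inequality of Theorem~\ref{teo3} is compatible with the Sobolev $K$-functional used in \cite{hs} (same homogeneous seminorm, same underlying measure-one ball $B$), and that the representation $\tilde X(B)$ of $X(\Omega)$ is the one for which \cite{hs}'s identification $\omega_{\tilde X(B)}\simeq K$ holds; both are routine once the conventions of Section~\ref{sec4} are fixed, and I expect the argument to follow the template of \cite{MM1} with only the $\inf_c$ modification.
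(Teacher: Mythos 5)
Your overall strategy — recognize $\omega_{X}$ as a $K$-functional (via \cite{hs}), split $f=g+h$, control the smooth part by Theorem~\ref{teo3}, and choose $c=c_h$ so that the single $\inf_c$ absorbs the $t$-dependent constant — is exactly the paper's route (it explicitly says "follow the proof of Theorem~1 in \cite{MM1}"). But the step you flag as the delicate one is handled differently, and your version has a genuine gap. You propose to cope with the non-additivity of rearrangement via the pointwise estimate $(f-c)^{\ast}(s)\le g^{\ast}(s/2)+(h-c_h)^{\ast}(s/2)$. That inequality gives an upper \emph{bound} for $(f-c)^{\circ}$ by a sum of two nonnegative radial functions, but the $K$-functional requires an actual \emph{decomposition} $(f-c)^{\circ}=u_0+u_1$ with $u_0\in\tilde X(B)$ and $u_1\in W^{1,\tilde X}(B)$; a pointwise majorization does not produce one. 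If you try to manufacture a decomposition from the majorization (say $u_1=(h-c_h)^{\ast}(\gamma_n|x|^n/2)$ and $u_0=(f-c)^{\circ}-u_1$), you lose control of $\|u_0\|_{\tilde X(B)}$ because $u_0$ need not be dominated in modulus by $g^{\ast}(\cdot/2)$; and if instead $u_0=\min((f-c)^{\circ},w_0)$, you lose control of $\nabla u_1$.

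The tool the paper actually relies on (and recalls at the start of Section~\ref{sec4}) is the \emph{contraction property of spherical rearrangement}, $\|f^{\circ}-g^{\circ}\|_{\tilde X(B)}\le\|f-g\|_{X(\Omega)}$. With this, the argument closes cleanly and the decomposition issue disappears: given a near-optimal splitting $f=g+h$ on $\Omega$, pick $c=c_h$ from Theorem~\ref{teo3} and decompose
\[
(f-c)^{\circ}=\bigl((f-c)^{\circ}-(h-c_h)^{\circ}\bigr)+(h-c_h)^{\circ}.
\]
The contraction property bounds the $\tilde X(B)$-norm of the first summand by $\|(f-c)-(h-c_h)\|_{X(\Omega)}=\|g\|_{X(\Omega)}$, while Theorem~\ref{teo3} bounds $\|\nabla(h-c_h)^{\circ}\|_{\tilde X(B)}$ by $\|\nabla h\|_{X(\Omega)}$. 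This is a genuine decomposition of $(f-c)^{\circ}$, so the $K$-functional estimate and hence the modulus-of-continuity inequality follow at once. Replacing your subadditivity step with the contraction property would make your proposal correct and aligned with the paper.
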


The paper is organized as follows: in Section \ref{sec1} we deal with the
modifications necessary to make the ``symmetrization by \ truncation
principle'' method of \cite{MMP} available in our setting, in particular this
section contains a proof that (\ref{teoA00}) implies theorem \ref{teoA} (ii),
we then complete the proofs of Theorems \ref{teoA} and \ref{teoB} in Section
\ref{sec2} while we prove Theorems \ref{teo3} and \ref{teo4} in section
\ref{sec4}.

\section{\label{sec1}Rearrangement Inequalities on Domains by Truncation}

Let $\Omega\subset\mathbb{R}^{n}$ be a domain which, for simplicity, we
suppose is such that $\left|  \Omega\right|  =1.$ In this section we prove
(cf. Theorem \ref{perplejo} below) that (\ref{teoA00}) implies by
symmetrization by truncation the rearrangement inequalities (\ref{teoA20}) and
(\ref{teoA2}) of Theorem \ref{teoA}. These results are variants of
symmetrization inequalities, which for functions vanishing at the boundary,
have appeared in articles by Bastero-Milman-Ruiz \cite{BMR}, Martin-Milman
\cite{MM}, Mazy'a \cite{Ma}, Talenti \cite{Ta}, Martin-Milman-Pustylnik
\cite{MMP}, etc. Our method of proof is by ``symmetrization by truncation''
developed recently in \cite{MMP}, therefore we shall only indicate briefly the
necessary changes and refer the reader to \cite{MMP} for complete details.

Throughout this section we shall assume that the following
Sobolev-Poincar\'{e} inequality holds
\begin{equation}
\left(  \int_{\Omega}\left|  f(x)-f_{\Omega}\right|  ^{p}dx\right)
^{1/p}\preceq\int_{\Omega}\left|  \nabla f(x)\right|  dx,\text{ for all
\ }f\in W^{1,1}(\Omega). \label{dosdos}%
\end{equation}

We now formally introduce the truncations we use

\begin{definition}
Let $f$ be a positive measurable function. Let $0<t_{1}<t_{2}<\infty.$ The
truncations $f_{t_{1}}^{t_{2}}$ of $f$ are defined by
\[
f_{t_{1}}^{t_{2}}(x)=\left\{
\begin{array}
[c]{ll}%
t_{2}-t_{1} & \text{if }f(x)>t_{2},\\
f(x)-t_{1} & \text{if }t_{1}<f(x)\leq t_{2},\\
0 & \text{if }f(x)\leq t_{1}.
\end{array}
\right.
\]
\end{definition}

The next useful result is a simple elementary fact that we state without proof.

\begin{lemma}
\label{ttt}Let $(X,\mu)$ be a finite measure space. If $w\geq0$ is a
measurable function such that $\mu(\left\{  w=0\right\}  )\geq\mu(X)/2$, then
for every $t>0$%
\[
\mu(\left\{  x\in X:w(x)\geq t\right\}  )\leq2\inf_{c\in\mathbb{R}}%
\mu(\left\{  x\in X:\left|  w(x)-c\right|  \geq t/2\right\}  ).
\]
\end{lemma}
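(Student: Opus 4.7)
The plan is to prove the inequality by a simple case analysis on the size of $c$, exploiting the hypothesis $\mu(\{w=0\}) \geq \mu(X)/2$, which in particular implies $\mu(\{w \geq t\}) \leq \mu(X)/2 \leq \mu(\{w=0\})$ for any $t>0$.

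Fix $t>0$ and an arbitrary $c \in \mathbb{R}$. First I would treat the case $|c| \geq t/2$: on the set $\{w = 0\}$ one has $|w(x) - c| = |c| \geq t/2$, so $\{w = 0\} \subseteq \{|w - c| \geq t/2\}$ and hence
\[
\mu(\{|w-c| \geq t/2\}) \;\geq\; \mu(\{w=0\}) \;\geq\; \mu(X)/2 \;\geq\; \mu(\{w \geq t\}).
\]
Next, in the complementary case $|c| < t/2$, on the set $\{w \geq t\}$ one has $w - c \geq t - |c| > t/2$, so $\{w \geq t\} \subseteq \{|w-c| \geq t/2\}$ and hence $\mu(\{w \geq t\}) \leq \mu(\{|w-c| \geq t/2\})$.

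In both cases one obtains $\mu(\{w \geq t\}) \leq \mu(\{|w-c| \geq t/2\})$, which is even stronger than the claimed inequality (the factor $2$ in the statement provides slack). Taking the infimum over $c \in \mathbb{R}$ on the right-hand side yields the conclusion.

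There is really no obstacle here: the only subtlety is recognizing that the assumption $\mu(\{w=0\}) \geq \mu(X)/2$ is exactly what forces $c$ to pay a cost either near $0$ (through the large set where $w$ vanishes) or near the large values of $w$ (through the set $\{w \geq t\}$), and the dichotomy $|c| \geq t/2$ versus $|c| < t/2$ makes this trade-off explicit.
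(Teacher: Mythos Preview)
Your proof is correct. In fact the paper does not supply a proof of this lemma at all; it simply states it as ``a simple elementary fact that we state without proof.'' Your case analysis on $|c|\geq t/2$ versus $|c|<t/2$ is exactly the natural argument, and as you observe it actually yields the sharper inequality
\[
\mu(\{w\geq t\})\leq \inf_{c\in\mathbb{R}}\mu(\{|w-c|\geq t/2\}),
\]
so the factor $2$ in the statement is not needed. (The only place where the hypothesis $\mu(\{w=0\})\geq\mu(X)/2$ enters is in the first case, via the chain $\mu(\{w\geq t\})\leq \mu(X)-\mu(\{w=0\})\leq \mu(X)/2\leq \mu(\{w=0\})\leq \mu(\{|w-c|\geq t/2\})$, which you have written correctly.)
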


We now state and prove the main result of this section (cf. also \cite{MMP})

\begin{theorem}
\label{perplejo}Let $f\in W^{1,1}(\Omega)$ then there exists $r_{f}%
\in\mathbb{R}$ such that

a.
\[
\int_{0}^{t}s^{1/p}\left(  -(f-r_{f})^{\ast}\right)  ^{^{\prime}}%
(s))ds\preceq\int_{0}^{t}\left|  \nabla f\right|  ^{\ast}(s)ds.
\]

b.
\[
s^{\frac{1}{p}-1}[\left(  f-r_{f}\right)  ^{\ast\ast}(s)-\left(
f-r_{f}\right)  ^{\ast}(s)]\preceq\int_{0}^{t}\left|  \nabla f\right|  ^{\ast
}(s)ds.
\]

c.
\[
\int_{0}^{t}s^{\frac{1}{p}-1}[\left(  f-r_{f}\right)  ^{\ast\ast}(s)-\left(
f-r_{f}\right)  ^{\ast}(s)]\frac{ds}{s}\preceq\int_{0}^{t}\left|  \nabla
f\right|  ^{\ast}(s)ds.
\]
\end{theorem}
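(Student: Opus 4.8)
The plan is to adapt the ``symmetrization by truncation'' machinery of \cite{MMP} to the non-vanishing setting, the key new ingredient being Lemma \ref{ttt}, which lets us pass from genuine Poincar\'e-type information about truncations (which, after a suitable choice of level, vanish on at least half of $\Omega$) back to rearrangement estimates involving the infimum over constants. First I would fix $f\in W^{1,1}(\Omega)$ and, replacing $f$ by $|f-\text{med}(f)|$ or working separately with positive and negative parts, reduce to the case $f\geq0$; then I would choose $r_f$ to be a median of $f$, so that each truncation $f_{t_1}^{t_2}$ for $t_1\geq r_f$ is supported on a set of measure $\leq|\Omega|/2=1/2$. Applying the hypothesis \eqref{dosdos} to the truncation $f_{t_1}^{t_2}$ gives
\[
\Bigl(\int_\Omega |f_{t_1}^{t_2}-(f_{t_1}^{t_2})_\Omega|^p\,dx\Bigr)^{1/p}\preceq \int_\Omega |\nabla f_{t_1}^{t_2}|\,dx=\int_{\{t_1<f\le t_2\}}|\nabla f|\,dx,
\]
and since $f_{t_1}^{t_2}$ vanishes on at least half of $\Omega$, the left side controls (up to the constant $2$ from Lemma \ref{ttt}, applied with $w=f_{t_1}^{t_2}$) the quantity $\bigl(\int_\Omega (f_{t_1}^{t_2})^{p}\,dx\bigr)^{1/p}$, hence also $(t_2-t_1)\,\mu(\{f>t_2\})^{1/p}$.

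Next I would translate this into a differential inequality for the distribution function. Writing $\mu_f$ for the distribution function of $f$ and choosing $t_1,t_2$ to be nearby level values, the estimate above becomes, in the standard way (as in \cite{MMP}, \cite{MM}),
\[
\frac{d}{ds}\Bigl(s^{1/p}(-(f-r_f)^\ast)(s)\Bigr)\ \text{integrated}\ \preceq\ \int_0^t |\nabla f|^\ast(s)\,ds,
\]
which is precisely part (a). More carefully: passing to rearrangements and using the coarea-type bound on $\int_{\{t_1<f\le t_2\}}|\nabla f|$ in terms of $\int_0^{\mu_f(t_1)}|\nabla f|^\ast$, one gets for a.e. $s$ the pointwise inequality $s^{1/p}\bigl(-(f-r_f)^\ast\bigr)'(s)\preceq |\nabla f|^\ast(\text{something}\le s)$, and integrating from $0$ to $t$ yields (a). Then (b) follows from (a) by a one-line integration by parts: $\int_0^t s^{1/p}(-g^\ast)'(s)\,ds = t^{1/p}(g^{\ast\ast}(t)-g^\ast(t))\cdot\frac1p + \text{(sign-definite terms)}$, so that $t^{1/p-1}[g^{\ast\ast}(t)-g^\ast(t)]\preceq \frac1t\int_0^t s^{1/p}(-g^\ast)'(s)\,ds\preceq \frac1t\int_0^t|\nabla f|^\ast \le \int_0^t|\nabla f|^\ast$ after absorbing the $1/t$ into monotonicity — here I would use that $\int_0^t|\nabla f|^\ast$ is concave in $t$, or simply the crude bound, depending on which normalization of $\preceq$ we want. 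Finally (c) follows from (a) by Fubini: $\int_0^t s^{1/p-1}[g^{\ast\ast}(s)-g^\ast(s)]\frac{ds}{s}$ is, up to a constant, $\int_0^t s^{1/p}(-g^\ast)'(s)\,\frac{ds}{s}\cdot(\text{weight})$ plus a boundary term, and one checks the weights match so that it is dominated by $\int_0^t s^{1/p}(-g^\ast)'(s)\,ds\preceq\int_0^t|\nabla f|^\ast$.

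The main obstacle I anticipate is the first step: making rigorous the reduction to truncations that vanish on half the domain while keeping the constant in \eqref{dosdos} under control, and correctly handling the passage from the Poincar\'e-corrected left-hand side $\int_\Omega|f_{t_1}^{t_2}-(f_{t_1}^{t_2})_\Omega|^p$ to the ``raw'' $\int_\Omega(f_{t_1}^{t_2})^p$. This is exactly what Lemma \ref{ttt} is for, but one must be careful that the $\inf_{c}$ on its right-hand side is compatible with the $\inf$ over translations $r_f$ we are allowed in the conclusion — i.e. the single choice $r_f=\text{med}(f)$ must work simultaneously for all truncation levels. A secondary technical point is the usual one in this circle of ideas: the coarea/Fleming--Rishel argument showing $\int_{\{t_1<f\le t_2\}}|\nabla f|\,dx$ behaves like an integral of $|\nabla f|^\ast$ over the appropriate interval requires the truncations to be in $W^{1,1}$ and a limiting argument as $t_2\downarrow t_1$; since \cite{MMP} carries this out in the $W_0^{1,1}$ case, I would cite that and only indicate the (minor) changes, as the paper's own roadmap promises.
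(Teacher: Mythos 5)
Your proposal follows the paper's proof closely: choose $r_f$ to be a median of $f$, split into the positive and negative parts $u=(f-r_f)\chi_{\{f\geq r_f\}}$ and $v=(r_f-f)\chi_{\{f\leq r_f\}}$ so that each truncation vanishes on a set of measure at least $1/2$, apply Lemma \ref{ttt} together with \eqref{dosdos} to each truncation to obtain $(t_2-t_1)\left|\left\{|f-r_f|\geq t_2\right\}\right|^{1/p}\preceq\int_{\{t_1<|f-r_f|\leq t_2\}}|\nabla f|\,dx$, differentiate along $t_1=(f-r_f)^{\ast}(s+h)$, $t_2=(f-r_f)^{\ast}(s)$ and integrate to get (a), and obtain (b), (c) by integration by parts. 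That is exactly the paper's argument.

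One step in your sketch of (b) is wrong as stated: the final inequality $\frac{1}{t}\int_0^t|\nabla f|^{\ast}\leq\int_0^t|\nabla f|^{\ast}$ is reversed for $t<1$ (which is the entire range here since $|\Omega|=1$), and neither concavity of $t\mapsto\int_0^t|\nabla f|^{\ast}$ nor ``absorbing the $1/t$ into monotonicity'' can repair it. You should simply stop at the previous step: the argument gives $t^{1/p-1}\bigl[(f-r_f)^{\ast\ast}(t)-(f-r_f)^{\ast}(t)\bigr]\preceq\frac{1}{t}\int_0^t|\nabla f|^{\ast}(s)\,ds=|\nabla f|^{\ast\ast}(t)$, and that is precisely the form needed downstream, since Lemma \ref{01} asks for $g(s)\preceq h^{\ast\ast}(s)$ (not $g(s)\preceq\int_0^s h^{\ast}$) as its hypothesis \eqref{h1}.
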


\begin{proof}
Let $r_{f}$ be such that
\[
\left|  \left\{  f\geq r_{f}\right\}  \right|  \geq1/2\text{ \ and \ }\left|
\left\{  f\leq r_{f}\right\}  \right|  \geq1/2.
\]
Let $u=\left(  f-r_{f}\right)  \chi_{\left\{  f\geq r_{f}\right\}  }$ and
$v=\left(  r_{f}-f\right)  \chi_{\left\{  f\leq r_{f}\right\}  }.$ Consider
the truncations $u_{t_{1}}^{t_{2}}$ of $u.$ Then,
\[
\left|  \left\{  u_{t_{1}}^{t_{2}}=0\right\}  \right|  \geq1/2.
\]
Thus, by Lemma \ref{ttt} and inequality (\ref{dosdos}), we see that for all
$t>0,$%
\begin{align*}
\left|  \left\{  u_{t_{1}}^{t_{2}}\geq t\right\}  \right|  ^{1/p}t  &
\leq2^{1/p+1}\inf_{c\in\mathbb{R}}\left|  \left\{  \left|  u_{t_{1}}^{t_{2}%
}-c\right|  \geq t/2\right\}  \right|  ^{1/p}t/2\\
&  \preceq\inf_{c\in\mathbb{R}}\left(  \int_{\Omega}\left|  u_{t_{1}}^{t_{2}%
}-c\right|  ^{p}dx\right)  ^{1/p}\\
&  \preceq\int_{\left\{  t_{1}<u\leq t_{2}\right\}  }\left|  \nabla
f(x)\right|  dx.
\end{align*}
Let $t=t_{2}-t_{1},$ then
\[
(t_{2}-t_{1})\left|  \left\{  u_{t_{1}}^{t_{2}}\geq t_{2}-t_{1}\right\}
\right|  ^{1/p}\preceq\int_{\left\{  t_{1}<u\leq t_{2}\right\}  }\left|
\nabla f(x)\right|  dx.
\]
The last inequality combined with
\[
\left|  \left\{  u\geq t_{2}\right\}  \right|  =\left|  \left\{  u_{t_{1}%
}^{t_{2}}\geq t_{2}-t_{1}\right\}  \right|
\]
yields
\[
(t_{2}-t_{1})\left|  \left\{  u\geq t_{2}\right\}  \right|  ^{1/p}\preceq
\int_{\left\{  t_{1}<u\leq t_{2}\right\}  }\left|  \nabla f(x)\right|  dx.
\]
Similarly,
\[
(t_{2}-t_{1})\left|  \left\{  v\geq t_{2}\right\}  \right|  ^{1/p}\preceq
\int_{\left\{  t_{1}<v\leq t_{2}\right\}  }\left|  \nabla f(x)\right|  dx.
\]
Note that $\left|  f-r_{f}\right|  =u+v,$ then from the definition of $u$ and
$v$, it is plain that for \ $0<\alpha<\beta,$
\[
\left\{  \beta>u+v\geq\alpha\right\}  =\left\{  \beta>u\geq\alpha\right\}
\cup\left\{  \beta>v\geq\alpha\right\}  .
\]
Thus,
\begin{align*}
(t_{2}-t_{1})\left|  \left\{  \left|  f-r_{f}\right|  \geq t_{2}\right\}
\right|  ^{1/p}  &  =(t_{2}-t_{1})\left(  \left|  \left\{  u\geq
t_{2}\right\}  \right|  +\left|  \left\{  v\geq t_{2}\right\}  \right|
\right)  ^{1/p}\\
&  \leq(t_{2}-t_{1})\left(  \left|  \left\{  u\geq t_{2}\right\}  \right|
^{1/p}+\left|  \left\{  v\geq t_{2}\right\}  \right|  ^{1/p}\right) \\
&  \preceq\left(  \int_{\left\{  t_{1}<u\leq t_{2}\right\}  }\left|  \nabla
f(x)\right|  dx+\int_{\left\{  t_{1}<v\leq t_{2}\right\}  }\left|  \nabla
f(x)\right|  dx\right) \\
&  =\int_{\left\{  t_{1}<\left|  f-r_{f}\right|  \leq t_{2}\right\}  }\left|
\nabla f(x)\right|  dx.
\end{align*}
Apply the previous inequality using $t_{1}=\left(  f-r_{f}\right)  ^{\ast
}(s+h)$ and $t_{2}=\left(  f-r_{f}\right)  ^{\ast}(s),$ where $s,h>0.$ Then
dividing the resulting inequality by $h$ and letting $h\rightarrow0$
(following the corresponding argument in \cite{Ta} and \cite{MMP}) we arrive
at
\begin{equation}
s^{1/p}\left(  -(f-r_{f})^{\ast}\right)  ^{^{\prime}}(s)\preceq\frac{\partial
}{\partial s}\int_{\left\{  \left|  f-r_{f}\right|  >(f-r_{f})^{\ast
}(s)\right\}  }\left|  \nabla f(x)\right|  dx. \label{pol01}%
\end{equation}
Therefore
\begin{equation}
\int_{0}^{t}s^{1/p}\left(  -(f-r_{f})^{\ast}\right)  ^{^{\prime}}%
(s))ds\preceq\int_{0}^{t}\left|  \nabla f\right|  ^{\ast}(s)ds, \label{pol02}%
\end{equation}
follows. To prove (b) we use the definitions and integration by parts to get
\begin{align*}
\left(  f-r_{f}\right)  ^{\ast\ast}(t)-\left(  f-r_{f}\right)  ^{\ast}(t)  &
=\frac{1}{t}\int_{0}^{t}\left(  \left(  f-r_{f}\right)  ^{\ast}(s)-\left(
f-r_{f}\right)  ^{\ast}(t\right)  )ds\\
&  =\frac{1}{t}\int_{0}^{t}s^{1-1/p}s^{1/p}\left(  -(f-r_{f})^{\ast}\right)
^{^{\prime}}(s)ds\\
&  \leq\frac{t^{1-1/p}}{t}\int_{0}^{t}s^{1/p}\left(  -(f-r_{f})^{\ast}\right)
^{^{\prime}}(s)ds,
\end{align*}
and we conclude by (\ref{pol02}).

For the proof of (c) we integrate
\[
s^{1/p-1}[\left(  f-r_{f}\right)  ^{\ast\ast}(s)-\left(  f-r_{f}\right)
^{\ast}(s)]=s^{1/p-2}\int_{0}^{s}u\left(  -(f-r_{f})^{\ast}\right)
^{^{\prime}}(u)du
\]
and integrate by parts (cf. \cite{MMP}).
\end{proof}

\section{\label{sec2}Proof of Theorems \ref{teoA} and \ref{teoB}}

In order to avoid putting conditions on the indices of the r.i. spaces we
shall need the following technical result, which is implicit in \cite{Ci}, and
whose proof we provide in an appendix.

\begin{lemma}
\label{01}Let $g,h$ be two positive measurable functions on $(0,\infty)$ such
that
\begin{equation}
g(s)\preceq h^{\ast\ast}(s),\text{for all }s\in(0,\infty), \label{h1}%
\end{equation}
and
\begin{equation}
\int_{0}^{t}g(s)\preceq\int_{0}^{t}h^{\ast}(s)ds,\text{ for all }s\in
(0,\infty). \label{h2}%
\end{equation}

Then
\[
\int_{0}^{t}g^{\ast}(s)ds\preceq\int_{0}^{t}h^{\ast}(s)ds,\text{ for all }%
t\in(0,\infty),
\]
and therefore for any r.i. space $X$%
\[
\left\|  g\right\|  _{X}\preceq\left\|  h\right\|  _{X}.
\]
\end{lemma}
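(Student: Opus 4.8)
The plan is to prove Lemma~\ref{01} by reducing the conclusion to a pointwise inequality on maximal-type functions. Since for any r.i.\ space $X$ one has $\|g\|_X = \|g^\ast\|_{\hat X}$ and the condition $\int_0^t g^\ast \preceq \int_0^t h^\ast$ for all $t$ is precisely the statement that the Calder\'on--Mityagin relation $g \prec h$ (up to a constant) holds, the second assertion follows from the first by the Calder\'on--Mityagin theorem, so the whole content is the inequality $\int_0^t g^\ast(s)\,ds \preceq \int_0^t h^\ast(s)\,ds$ for all $t$. Equivalently, writing $G(t)=\int_0^t g^\ast$ and recalling that $G(t) = \sup\{\int_E g : |E|=t\} = t\,(g)^{\ast\ast}(t)$, it suffices to show $t\,g^{\ast\ast}(t) \preceq \int_0^t h^\ast(s)\,ds$ for all $t>0$.

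First I would fix $t>0$ and split the integral $\int_0^t g^\ast(s)\,ds$ at the level $s=t/2$ (or more generally at $\lambda t$ for a fixed $\lambda\in(0,1)$ to be optimized). On $(0,t/2)$ I would use hypothesis \eqref{h2} directly. For the tail $(t/2,t)$ the idea is to bound $g^\ast$ on this range by $g^\ast(t/2)$ and then estimate $g^\ast(t/2)$. Here is where hypothesis \eqref{h1} enters: from $g \preceq h^{\ast\ast}$ pointwise we get $g^\ast(s) \preceq (h^{\ast\ast})^\ast(s) = h^{\ast\ast}(s)$, because $h^{\ast\ast}$ is already decreasing, so its decreasing rearrangement is itself. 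Thus $g^\ast(t/2) \preceq h^{\ast\ast}(t/2) = \frac{2}{t}\int_0^{t/2} h^\ast(s)\,ds \le \frac{2}{t}\int_0^{t} h^\ast(s)\,ds$, and hence $\int_{t/2}^t g^\ast(s)\,ds \le \frac{t}{2}\,g^\ast(t/2) \preceq \int_0^t h^\ast(s)\,ds$. Combining the two pieces gives $\int_0^t g^\ast(s)\,ds \preceq \int_0^t h^\ast(s)\,ds$ with a constant depending only on the constants in \eqref{h1}, \eqref{h2}.

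The slightly delicate point, and the one I expect to be the main obstacle, is the passage from the pointwise hypothesis $g(s)\preceq h^{\ast\ast}(s)$ to a usable bound on $g^\ast$: one must be careful that $g$ itself need not be decreasing, so the key observation is precisely that the rearrangement of the \emph{decreasing} function $h^{\ast\ast}$ is $h^{\ast\ast}$ again, and that rearrangement is monotone (if $0\le \varphi \le \psi$ then $\varphi^\ast \le \psi^\ast$). A second small subtlety is justifying $\int_0^t g^\ast(s)\,ds = \sup_{|E|=t}\int_E g(s)\,ds$ in the $\sigma$-finite but not finite-measure setting $(0,\infty)$; this is standard (Hardy--Littlewood), and I would just cite \cite{BS}. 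Once $\int_0^t g^\ast \preceq \int_0^t h^\ast$ is established for all $t$, the inequality $\|g\|_X \preceq \|h\|_X$ for every r.i.\ space $X$ is immediate from the Calder\'on--Mityagin / Hardy--Littlewood--P\'olya principle together with the fact that r.i.\ norms are monotone with respect to the Hardy--Littlewood--P\'olya order (again \cite{BS}); here no restriction on Boyd indices is needed, which is exactly the improvement over \cite{MMP} advertised in the remark following Theorem~\ref{teoA}.

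One remark on bookkeeping: rather than splitting at $t/2$ one can equally argue via $g^{\ast\ast}$ directly, using the elementary identity $\frac{d}{dt}\big(t\,g^{\ast\ast}(t)\big) = g^\ast(t)$ and a Gronwall-type comparison, but the two-piece splitting above is cleaner and completely elementary, so that is the route I would write up in the appendix.
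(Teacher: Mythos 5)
Your argument for the tail piece $\int_{t/2}^{t} g^{\ast}(s)\,ds$ is fine: since $h^{\ast\ast}$ is nonincreasing and rearrangement preserves pointwise order, (\ref{h1}) gives $g^{\ast}\preceq (h^{\ast\ast})^{\ast}=h^{\ast\ast}$, and then
\[
\int_{t/2}^{t}g^{\ast}(s)\,ds\leq \tfrac{t}{2}\,g^{\ast}(t/2)\preceq \tfrac{t}{2}\,h^{\ast\ast}(t/2)=\int_{0}^{t/2}h^{\ast}(s)\,ds\leq\int_{0}^{t}h^{\ast}(s)\,ds.
\]
This uses only (\ref{h1}). The gap is in the head piece. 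You write ``on $(0,t/2)$ I would use hypothesis (\ref{h2}) directly,'' but (\ref{h2}) bounds $\int_{0}^{t}g(s)\,ds$, not $\int_{0}^{t}g^{\ast}(s)\,ds$, and these differ badly when $g$ is not decreasing: the inequality $\int_{0}^{t}g^{\ast}\geq\int_{0}^{t}g$ goes the wrong way. For a concrete failure take $g=\tfrac{1}{10}\chi_{(10,11)}$ and $h=\chi_{(0,1)}$; both (\ref{h1}) and (\ref{h2}) hold, yet $\int_{0}^{t}g(s)\,ds=0$ for all $t\leq 10$ while $\int_{0}^{t}g^{\ast}(s)\,ds=t/10>0$, so (\ref{h2}) gives you nothing about $\int_{0}^{t/2}g^{\ast}$. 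Trying to repair this by iterating your tail estimate (split $(0,t/2)$ at $t/4$, etc.) only produces $\int_{0}^{t}g^{\ast}\preceq\sum_{k\geq 1}\int_{0}^{t/2^{k}}h^{\ast}(s)\,ds=\int_{0}^{t}h^{\ast}(s)\lfloor\log_{2}(t/s)\rfloor\,ds$, i.e.\ a logarithmic loss that cannot be absorbed with a constant independent of $h$ (take $h^{\ast}$ concentrated near the origin, e.g.\ $h^{\ast}(s)\simeq s^{-\alpha}$ with $\alpha\uparrow 1$, or $h^{\ast}(s)\simeq 1/(s\log^{2}(1/s))$).

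This logarithmic loss is precisely the point of the lemma, and it is what the paper's argument is engineered to cancel. The paper does not bound $\int_{0}^{t}g^{\ast}$ through a dyadic split; instead it estimates $\int_{E}g$ for a finite union of intervals $E=\bigcup_{i}(a_{i},b_{i})$. Hypothesis (\ref{h1}) together with Fubini yields a bound of the form $\bigl(1+\sum_{i}\log(b_{i}/a_{i})\bigr)\int_{0}^{|E|}h^{\ast}$, i.e.\ the log appears explicitly. If the log sum is $\leq 1$ one is done; if it is $>1$, one locates a threshold point $c_{j_{0}}$ at which the log sum from $c_{j_{0}}$ onward is between $1$ and $2$, uses the log bound for the part of $E$ above $c_{j_{0}}$, and observes that $c_{j_{0}}$ is then dominated by the measure of that part, so the remaining initial part $E\cap(0,c_{j_{0}})\subset(0,c_{j_{0}})$ can be controlled by (\ref{h2}) applied on the interval $(0,c_{j_{0}})$. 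That is the mechanism by which (\ref{h2}) enters — not as a bound for $\int_{0}^{\cdot}g^{\ast}$, but as a bound for $\int_{0}^{\cdot}g$ over an initial segment whose length is comparable to the measure of the tail part of $E$. Your writeup identifies the passage from $g\preceq h^{\ast\ast}$ to $g^{\ast}\preceq h^{\ast\ast}$ as ``the slightly delicate point,'' but that step is elementary; the genuinely delicate point, the interaction between (\ref{h1}) and (\ref{h2}) that kills the logarithm, is missing from your proof. The final reduction (Hardy--Littlewood--P\'olya majorization plus Calder\'on's principle giving $\|g\|_{X}\preceq\|h\|_{X}$ for every r.i.\ $X$) is correct and is also the paper's.
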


\subsection{The proof of Theorem \ref{teoA}}

\begin{proof}
In Section \ref{sec1} we proved that $(i)\rightarrow(ii).$

$(ii)\rightarrow(iii)$. Applying Lemma \ref{01} with
\[
g(s)=s^{1/p-1}\left(  f-r_{f}\right)  ^{\ast\ast}(s)-\left(  f-r_{f}\right)
^{\ast}(s)\ \text{and }h^{\ast}(s)=\left|  \nabla f\right|  ^{\ast}(s)
\]
we get
\[
\inf_{c\in\mathbb{R}}\left\|  s^{\frac{1}{p}-1}[\left(  f-c\right)  ^{\ast
\ast}(s)-\left(  f-c\right)  ^{\ast}(s)\right\|  _{\hat{X}}\preceq\left\|
\nabla f\right\|  _{X(\Omega)}.
\]

$(iii)\rightarrow(iv).$ Applying (iii) with $X(\Omega)=L^{1}(\Omega)$ we get
\[
\inf_{c\in\mathbb{R}}\int_{0}^{1}s^{1/p-1}\left[  \left(  f-c\right)
^{\ast\ast}(s)-\left(  f-c\right)  ^{\ast}(s)\right]  ds\preceq\left\|  \nabla
f\right\|  _{L^{1}(\Omega)}.
\]
We then note that
\[
\int_{0}^{1}s^{1/p-1}\left[  \left(  f-c\right)  ^{\ast\ast}(s)-\left(
f-c\right)  ^{\ast}(s)\right]  ds\simeq\left\|  f-c\right\|  _{L^{p,1}%
({\Omega})},
\]
and conclude with
\[
\left\|  f-f_{\Omega}\right\|  _{L^{p,1}(\Omega)}\leq2\inf_{c\in\mathbb{R}%
}\left\|  f-c\right\|  _{L^{p,1}({\Omega})}\preceq\left\|  \left|  \nabla
f\right|  \right\|  _{L^{1}(\Omega)}.
\]

$(iv)\rightarrow(i)$ This implication is trivial since
\[
L^{p,1}({\Omega})\subset L^{p}({\Omega}).
\]
\end{proof}

\subsection{The proof of Theorem \ref{teoB}}

\begin{proof}
$(i)\rightarrow(ii).$ Given $f\in\hat{X},$ let $h(t)=\int_{t}^{1}%
s^{1/n}\left|  f(s)\right|  \frac{ds}{s}$, then $h(t)=h^{\ast}(t).$
Consequently, by Fubini,
\begin{align*}
h^{\ast\ast}(t)-h^{\ast}(t)  &  =\frac{1}{t}\int_{0}^{t}h(s)ds=\frac{1}{t}%
\int_{0}^{t}\left(  \int_{x}^{1}s^{1/n}\left|  f(s)\right|  \frac{ds}%
{s}\right)  dx-h(t)\\
&  =\frac{1}{t}\int_{0}^{t}s^{1/n}\left|  f(s)\right|  ds.
\end{align*}
Also note that
\[
\left\|  h\right\|  _{L^{1}}\leq\int_{0}^{1}s^{1/n}\left|  f(s)\right|
ds\leq\left\|  f\right\|  _{L^{1}}.
\]
Consequently by ($i$)
\begin{align*}
\left\|  \int_{t}^{1}s^{1/n}\left|  f(s)\right|  \frac{ds}{s}\right\|
_{\hat{Y}}  &  =\left\|  h\right\|  _{\hat{Y}}\\
&  \preceq\left\|  t^{-1/n}\left(  h^{\ast\ast}(t)-h^{\ast}(t)\right)
\right\|  _{\hat{X}}+\left\|  h\right\|  _{L^{1}}\\
&  =\left\|  t^{-1/n}\frac{1}{t}\int_{0}^{t}s^{1/n}\left|  f(s)\right|
ds\right\|  _{\hat{X}}+\left\|  f\right\|  _{L^{1}}\\
&  \preceq\left\|  f\right\|  _{\hat{X}},
\end{align*}
where in the last inequality we used the fact that $\left\|  f\right\|
_{L^{1}}\leq\left\|  f\right\|  _{\hat{X}},$ and that for any $\alpha
>0,\left\|  t^{-\alpha}\frac{1}{t}\int_{0}^{t}\left|  g(s)\right|  ds\right\|
_{\hat{X}}\leq\frac{1}{\alpha}\left\|  g\right\|  _{\hat{X}}$ (see \cite[Lemma
2.7]{MP}).

$(ii)\rightarrow(iii).$ Pick $r_{f}\in\mathbb{R},$ such that
\begin{equation}
\left\|  s^{-1/n}[\left(  f-r_{f}\right)  ^{\ast\ast}(s)-\left(
f-r_{f}\right)  ^{\ast}(s)\right\|  _{\hat{X}}\leq2\inf_{c\in\mathbb{R}%
}\left\|  s^{-1/n}[\left(  f-c\right)  ^{\ast\ast}(s)-\left(  f-c\right)
^{\ast}(s)\right\|  _{\hat{X}} \label{nueva}%
\end{equation}
By the fundamental theorem of calculus
\[
\left(  f-r_{f}\right)  ^{\ast\ast}(t)=\int_{t}^{1}[\left(  f-r_{f}\right)
^{\ast}(s)-\left(  f-r_{f}\right)  ^{\ast}(s)]\frac{ds}{s}+\int_{0}^{1}\left(
f-r_{f}\right)  ^{\ast}(s)ds.
\]
Thus,
\begin{align*}
\left\|  f-r_{f}\right\|  _{Y}  &  \leq\left\|  \left(  f-r_{f}\right)
^{\ast\ast}\right\|  _{\hat{Y}}\\
&  =\left\|  \int_{t}^{1}s^{1/n}\left(  s^{-1/n}[\left(  f-r_{f}\right)
^{\ast}(s)-\left(  f-r_{f}\right)  ^{\ast}(s)]\right)  \frac{ds}{s}\right\|
_{\hat{Y}}\\
&  +\left\|  f-r_{f}\right\|  _{L^{1}(\Omega)}.\\
&  \preceq\left\|  s^{-1/n}[\left(  f-r_{f}\right)  ^{\ast}(s)-\left(
f-r_{f}\right)  ^{\ast}(s)]\right\|  _{\hat{X}}+\left\|  f-r_{f}\right\|
_{L^{1}(\Omega)}\\
&  \preceq\left\|  \nabla f\right\|  _{X(\Omega)}+\left\|  f-r_{f}\right\|
_{L^{1}(\Omega)}\text{ \ \ \ \ \ \ \ (by (\ref{nueva}) and (\ref{final})).}%
\end{align*}
Therefore
\begin{equation}
\inf_{c\in\mathbb{R}}\left\|  f-c\right\|  _{Y(\Omega)}\preceq\left\|  \nabla
f\right\|  _{X(\Omega)}+\inf_{c\in\mathbb{R}}\left\|  f-c\right\|
_{L^{1}(\Omega)}. \label{sabia}%
\end{equation}
To estimate the second term to the right we observe that
\begin{align*}
\inf_{c\in\mathbb{R}}\left\|  f-c\right\|  _{L^{1}(\Omega)}  &  \leq\inf
_{c\in\mathbb{R}}\left(  \int_{\Omega}\left|  f(x)-c\right|  ^{n/(n-1)}%
dx\right)  ^{(n-1)/n}\\
&  \preceq\int_{\Omega}\left|  \nabla f(x)\right|  dx\text{ \ \ \ (by
(\ref{poi}))}\\
&  \leq\left\|  \nabla f\right\|  _{X(\Omega)}.
\end{align*}
Inserting this estimate back into (\ref{sabia}) we find that
\[
\inf_{c\in\mathbb{R}}\left\|  f-c\right\|  _{Y(\Omega)}\preceq\left\|  \nabla
f\right\|  _{X(\Omega)},
\]
which combined with the elementary inequality
\[
\left\|  f-f_{\Omega}\right\|  _{Y(\Omega)}\preceq2\inf_{c\in\mathbb{R}%
}\left\|  f-c\right\|  _{Y(\Omega)},
\]
gives us $\left(  iii\right)  $.

$(iii)\rightarrow(ii).$ We assume, without loss of generality, that
$0\in\Omega$. Let $\sigma>0$ so that the ball centered at $0$ and having
measure $\sigma$ is contained in $\Omega.$ Given a positive function $g\in
\hat{X},$ with supp $g\subset\lbrack0,\sigma]$, define
\[
u(x)=\int_{\gamma_{n}\left|  x\right|  ^{n}}^{1}g(s)s^{1/n-1}ds\text{, \ }%
\]
where $\gamma_{n}=$ measure of the unit ball in $\mathbb{R}^{n}$. Observe that
for $h\in\hat{X}$ we have that
\[
\left|  \left\{  x\in B:h(\gamma_{n}\left|  x\right|  ^{n})>\lambda\right\}
\right|  =\left|  \left\{  t\in(0,1):h(t)>\lambda\right\}  \right|  .
\]
Consequently
\[
u^{\ast}(t)=\int_{t}^{1}s^{1/n}g(s)\frac{ds}{s}.
\]
Moreover, and easy computation shows that $\left|  \nabla u\right|
(x)=ng(\gamma_{n}\left|  x\right|  ^{n}).$ It follows from $(iii)$ that
\begin{align*}
\left\|  u\right\|  _{Y(\Omega)}  &  =\left\|  u^{\ast}\right\|  _{\hat{Y}%
}=\left\|  \int_{t}^{1}g(s)s^{1/n-1}ds\text{ }\right\|  _{\hat{Y}}%
\preceq\left\|  \nabla u \right\|  _{X(\Omega)}+\left\|  u_{\Omega}\right\|
_{Y(\Omega)}\\
&  =\left\|  g\right\|  _{\hat{X}}+\left\|  u_{\Omega}\right\|  _{Y(\Omega)}%
\end{align*}

We conclude observing that
\begin{align*}
\left\|  u_{\Omega}\right\|  _{Y(\Omega)}  &  =\left\|  \int_{\Omega}%
\int_{\gamma_{n}\left|  x\right|  ^{n}}^{1}g(s)s^{1/n-1}ds\right\|
_{Y(\Omega)}\leq\left\|  \int_{\gamma_{n}\left|  x\right|  ^{n}}^{1}\left|
g(s)\right|  s^{1/n-1}ds\right\|  _{L^{1}(\Omega)}\\
&  =\left\|  \int_{t}^{1}s^{1/n}\left|  g(s)\right|  \frac{ds}{s}\right\|
_{L^{1}}\leq\left\|  \int_{t}^{1}\left|  g(s)\right|  \frac{ds}{s}\right\|
_{L^{1}}\leq\left\|  g\right\|  _{L^{1}}\leq\left\|  g\right\|  _{\hat{X}}.
\end{align*}

Now, let $g\geq0$ be an arbitrary function from $\hat{X}.$ Then
\begin{align*}
\left\|  \int_{t}^{1}g(s)s^{1/n-1}ds\text{ }\right\|  _{\hat{Y}}  &
\leq\left\|  \int_{t}^{1}g(s)s^{1/n-1}\chi_{(0,\sigma)}(s)ds\text{ }\right\|
_{\hat{Y}}\\
&  +\left\|  \int_{t}^{1}g(s)s^{1/n-1}\chi_{(\sigma,1)}(s)ds\text{ }\right\|
_{\hat{Y}}\\
&  \leq\left\|  g\right\|  _{\hat{X}}+\left\|  \int_{t}^{1}g(s)s^{1/n-1}%
\chi_{(\sigma,1)}(s)ds\text{ }\right\|  _{\hat{Y}}.
\end{align*}
The last term on the right hand side can be readily estimated using
Minkowski's inequality
\begin{align*}
\left\|  \int_{t}^{1}g(s)s^{1/n-1}\chi_{(\sigma,1)}(s)ds\text{ }\right\|
_{\hat{Y}}  &  \leq\left\|  g\right\|  _{\hat{Y}}\int_{\sigma}^{1}%
s^{1/n-1}ds\\
&  \preceq\left\|  g\right\|  _{\hat{Y}}%
\end{align*}
and $(ii)$ follows$.$

$(ii)\rightarrow(i).$ By the fundamental theorem of calculus
\[
f^{\ast\ast}(t)=\int_{t}^{1}[f^{\ast\ast}(s)-f^{\ast}(s)]\frac{ds}{s}+\int
_{0}^{1}f^{\ast}(s)ds.
\]
Thus
\begin{align*}
\left\|  f\right\|  _{\hat{Y}}  &  \leq\left\|  \int_{t}^{1}s^{1/n}%
s^{-1/n}[f^{\ast\ast}(s)-f^{\ast}(s)]\frac{ds}{s}\right\|  _{\hat{Y}}+\left\|
f\right\|  _{L^{1}}\\
&  \preceq\left\|  s^{-1/n}[f^{\ast\ast}(s)-f^{\ast}(s)]\right\|  _{\hat{X}%
}+\left\|  f\right\|  _{L^{1}}.
\end{align*}
\end{proof}

Theorem \ref{teoB} raises the question of whether it is possible to prove
similar results for $p\neq\frac{n}{n-1}.$

\begin{proposition}
\label{Har}Suppose that the Sobolev-Poincar\'{e} inequality (\ref{poi}) holds
for some $1\leq p\leq\frac{n}{n-1},$ and let $X(\Omega)$ and $Y(\Omega)$ be
two r.i. spaces. We have

(i) if $X(\Omega)$ and $Y(\Omega)$ are such that
\begin{equation}
\left\|  \int_{t}^{1}u^{1-1/p}g(u)\frac{du}{u}\right\|  _{\hat{Y}}\leq
c\left\|  g\right\|  _{\hat{X}},\text{\ \ \ \ \ \ }\forall g\in\hat{X};
\label{acota}%
\end{equation}
then
\begin{equation}
\left\|  f-f_{\Omega}\right\|  _{Y(\Omega)}\preceq\left\|  \nabla f\right\|
_{X(\Omega)}. \label{taoca}%
\end{equation}
(ii) If $p\neq\frac{n}{n-1},$ then it is not necessarily true, in general,
that (\ref{taoca}) implies (\ref{acota}).
\end{proposition}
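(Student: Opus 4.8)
The plan is to handle the two parts separately. For part (i), the strategy is to combine the symmetrization inequality from Theorem \ref{perplejo} with the assumed boundedness of the weighted Hardy operator. Starting from a function $f\in W^1_X(\Omega)$, I would pick $r_f\in\mathbb R$ as in Theorem \ref{perplejo} and use the fundamental theorem of calculus to write, for $g(s)=s^{1/p-1}[(f-r_f)^{\ast\ast}(s)-(f-r_f)^{\ast}(s)]$,
\[
(f-r_f)^{\ast\ast}(t)=\int_t^1 u^{1-1/p}\,g(u)\,\frac{du}{u}+\int_0^1 (f-r_f)^{\ast}(s)\,ds.
\]
Then $\|f-r_f\|_{Y(\Omega)}\le\|(f-r_f)^{\ast\ast}\|_{\hat Y}$ is bounded, via the triangle inequality, by $\|\int_t^1 u^{1-1/p}g(u)\,du/u\|_{\hat Y}+\|f-r_f\|_{L^1(\Omega)}$. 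The first term is $\preceq\|g\|_{\hat X}$ by hypothesis (\ref{acota}), and $\|g\|_{\hat X}\preceq\|\nabla f\|_{X(\Omega)}$ by Theorem \ref{perplejo}(b)--(c) together with Lemma \ref{01} (exactly as in the proof of $(ii)\to(iii)$ of Theorem \ref{teoA}). The term $\|f-r_f\|_{L^1(\Omega)}$ is controlled by $\inf_c\|f-c\|_{L^1(\Omega)}\le\inf_c(\int_\Omega|f-c|^{n/(n-1)})^{(n-1)/n}\preceq\int_\Omega|\nabla f|\le\|\nabla f\|_{X(\Omega)}$, using (\ref{poi}) and the fact that $L^1(\Omega)$ is the smallest r.i. space on a space of measure $1$. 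Passing from $\inf_c\|f-c\|_{Y(\Omega)}$ to $\|f-f_\Omega\|_{Y(\Omega)}$ costs only a factor $2$, giving (\ref{taoca}).

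For part (ii), the point is to exhibit a single pair $X,Y$ for which (\ref{taoca}) holds but the one-dimensional Hardy inequality (\ref{acota}) fails, when $p\neq n/(n-1)$. The natural choice is to take $X=Y=L^{n}(\Omega)$ (or, more robustly, $X=Y=L^\infty(\Omega)$), for which the Sobolev--Poincar\'e embedding (\ref{taoca}) is classical for Lipschitz-type domains — indeed for $X=Y=L^\infty$ one has $\|f-f_\Omega\|_\infty\preceq\|\nabla f\|_\infty$ trivially on a bounded connected domain, so (\ref{taoca}) holds regardless of the value of $p$. On the other hand, for the corresponding representation space $\hat X=\hat Y=L^\infty(0,1)$ the operator in (\ref{acota}) sends $g\equiv 1$ to $\int_t^1 u^{1-1/p}\,du/u=\int_t^1 u^{-1/p}\,du$, which is bounded on $(0,1)$ precisely when $1/p<1$, i.e. $p>1$; and more to the point the operator norm of $g\mapsto\int_t^1 u^{1-1/p}g(u)\,du/u$ from $L^\infty$ to $L^\infty$ equals $\sup_{0<t<1}\int_t^1 u^{-1/p}\,du$, which blows up only in the excluded case $p=1$. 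So for the genuinely strict inequality $p\neq n/(n-1)$ one must instead calibrate $X$ and $Y$ so that the weight mismatch between the ``$n$'' coming from the geometric Sobolev embedding and the ``$p$'' appearing in (\ref{acota}) is visible: take $X=L^{1}(\Omega)$ and $Y=L^{p,1}(\Omega)$. Then (\ref{taoca}) is exactly Theorem \ref{teoA}(iv), which holds, while (\ref{acota}) with $\hat X=L^1(0,1)$, $\hat Y=L^{p,1}(0,1)$ reduces by duality/Hardy's inequality to the requirement $\int_t^1 u^{1-1/p}\,du/u\preceq t^{1/p-1}$ uniformly, i.e. $\int_t^1 u^{-1/p}\,du\preceq t^{1/p-1}$, which for $p<1$-exponent bookkeeping holds with the Sobolev exponent $p=n/(n-1)$ but fails once one tries to run the same computation with the generic $p$; the discrepancy is detected by testing on $g=\chi_{(0,\varepsilon)}$ and letting $\varepsilon\to0$.

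I would expect the main obstacle to be part (ii): producing a clean counterexample requires choosing $X,Y$ so that (\ref{taoca}) genuinely holds (this forces $X,Y$ into the range where the true Sobolev--Poincar\'e inequality, with its geometric exponent $n/(n-1)$, applies) while (\ref{acota}) genuinely fails (this requires the Hardy weight $u^{1-1/p}$ to be the ``wrong'' one). The resolution is that for $p\neq n/(n-1)$ the correct functional-analytic content of (\ref{taoca}) is governed by the weight $u^{1-(n-1)/n}=u^{1/n}$ (coming from the sharp rearrangement inequality behind (\ref{poi})), not by $u^{1-1/p}$, so the implication $(\ref{taoca})\Rightarrow(\ref{acota})$ would force an inequality between two Hardy operators with different homogeneities, which can be broken by a one-parameter family of test functions $g=\chi_{(0,\varepsilon)}$. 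Making this quantitative — choosing the explicit spaces and the explicit test functions and checking the two asymptotics as $\varepsilon\to0$ — is the step that needs care; everything in part (i) is a routine assembly of Theorem \ref{perplejo}, Lemma \ref{01}, and (\ref{poi}).
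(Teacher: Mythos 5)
Your part (i) is fine and matches the paper: the paper simply notes that the proof of $(ii)\Rightarrow(iii)$ in Theorem~\ref{teoB} works verbatim for any $p$, and your assembly via Theorem~\ref{perplejo}, Lemma~\ref{01}, the fundamental theorem of calculus decomposition of $(f-r_f)^{\ast\ast}$, and (\ref{poi}) to absorb the $L^1$ term is exactly that argument.

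Part (ii), however, is broken: neither of your proposed pairs $(X,Y)$ is an actual counterexample. The $L^\infty\to L^\infty$ attempt you already (correctly) discard. For the pair $X=L^1,\ Y=L^{p,1}$ the operator $Tg(t)=\int_t^1 u^{-1/p}g(u)\,du$ \emph{is} bounded: for $g\ge 0$, $Tg$ is nonincreasing, so $(Tg)^\ast=Tg$ and by Fubini
\[
\|Tg\|_{L^{p,1}}=\int_0^1 t^{1/p-1}\int_t^1 u^{-1/p}g(u)\,du\,dt
=\int_0^1 g(u)\,u^{-1/p}\int_0^u t^{1/p-1}\,dt\,du
= p\int_0^1 g(u)\,du,
\]
so (\ref{acota}) holds with constant $p$. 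Testing on $g=\chi_{(0,\varepsilon)}$, as you propose, therefore detects no discrepancy; the ``weight mismatch'' heuristic you invoke (that the rearrangement content of (\ref{taoca}) should be governed by $u^{1/n}$ rather than $u^{1-1/p}$) is not correct in the generality of the proposition, where the isoperimetric hypothesis is at exponent $p$, not $n/(n-1)$. What the paper actually does is take $\Omega$ an $s$-John domain with $1<s<n/(n-1)$ (so the $p$ in (\ref{teoA00}) is $n/((n-1)s)<n/(n-1)$), invoke the known $L^t\to L^r$ Sobolev--Poincar\'e inequality for such domains from \cite{KM} with a carefully chosen pair $1<t<r$, and then show directly via Maz'ya's characterization of weighted Hardy inequalities that $Hg(t)=\int_t^1 u^{-(n-1)s/n}g(u)\,du$ is unbounded from $L^t$ to $L^r$, because the relevant supremum behaves like $a^{(n-1)(1-t)(s-1)/(nt)}$ as $a\to 0$, which blows up precisely when $s>1$. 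The essential point you are missing is that for $s$-John domains the $L^t\to L^r$ Poincar\'e estimates come from finer geometric information than the single $p$-isoperimetric inequality, so they are genuinely stronger than what the Hardy reduction can see; your choices of $(X,Y)$ stay inside the range where the Hardy operator is bounded, so they cannot exhibit the gap.
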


\begin{proof}
(i) The proof given in Theorem \ref{teoB} for $p=\frac{n}{n-1}$ works without
any changes in the general case.

(ii) Let $1<s<\frac{n}{n-1},$ and let $\Omega$ be an $s-$John domain. Then
$\Omega\in$ $\mathcal{J}_{(n-1)s/n}$ (cf. \cite{HK}) therefore the following
Sobolev-Poincar\'{e} inequality holds
\[
\left(  \int_{\Omega}\left|  f(x)-f_{\Omega}\right|  ^{\frac{n}{(n-1)s}%
}dx\right)  ^{(n-1)s/n}\preceq\int_{\Omega}\left|  \nabla f(x)\right|  dx.
\]
Let $t>1$ be such that $s>\frac{t-1}{n-1},$ and let $r=\frac{nt}%
{(n-1)s+(1-t)}.$ Note that $1<t<r.$ We will show that the validity of the
Sobolev-Poincar\'{e} inequality for $s-$John domains (cf. \cite{KM})
\[
\left\|  f-f_{\Omega}\right\|  _{r}\preceq\left\|  \nabla f\right\|  _{t},
\]
(this corresponds to the choice $Y=L^{r},X=L^{t}$ in Theorem \ref{teoB}) does
not imply that the Hardy operator $Hg(t)=\int_{t}^{1}u^{-(n-1)s/n}g(u)du$ \ is
a bounded operator, $H:L^{t}\rightarrow L^{r}.$ The boundedness of $H$ can be
reformulated as a weighted norm inequality for the operator $g\rightarrow
\int_{x}^{1}g(u)du,$ namely
\begin{equation}
\left\|  \int_{x}^{1}g(u)du\right\|  _{L^{r}}\leq c\left\|  g(x)x^{(n-1)s/n}%
\right\|  _{L^{t}}. \label{sabiamos}%
\end{equation}
It is well known that (\ref{sabiamos}) holds iff (cf. \cite[Theorem 3 page
44]{Ma})
\begin{equation}
\sup_{a>0}\left(  \int_{0}^{a}1\right)  ^{1/r}\left(  \int_{a}^{1}\left(
u^{(n-1)st/n}\right)  ^{\frac{-1}{t-1}}du\right)  ^{\frac{t-1}{t}}<\infty.
\label{dos}%
\end{equation}
Now, since $s<\frac{n}{n-1},$ it follows that $\frac{-(n-1)st}{n(t-1)}+1<0,$
and for $a$ near zero we have
\begin{align*}
\left(  \int_{0}^{a}1\right)  ^{1/r}\left(  \int_{a}^{1}\left(  s^{(n-1)st/n}%
\right)  ^{\frac{-1}{t-1}}\right)  ^{\frac{t-1}{t}}  &  \simeq a^{1/r}\left(
a^{\frac{-(n-1)st+n(t-1)}{n(t-1)}}-1\right)  ^{\frac{t-1}{t}}\\
&  \simeq a^{1/r}a^{\frac{-(n-1)st+n(t-1)}{nt}}\\
&  \simeq a^{\frac{(n-1)(1-t)(s-1)}{nt}}.
\end{align*}
Consequently, since $\frac{(n-1)(1-t)(s-1)}{nt}<0,$ (\ref{dos}) cannot hold.
\end{proof}

\begin{remark}
Let $h$ and $g$ be continuous, positive functions on an open set
$\Omega\subset\mathbb{R}^{n},$ and furthermore suppose that $\int_{\Omega
}h(x)dx<\infty$ (for simplicity we assume that $\int_{\Omega}h(x)dx=1).$ Let
$1<p<\infty,$ and assume that for every\footnote{If the weights are are
sufficiently nice the standard proof of density applies in order to extend
this inequality to Sobolev spaces.} $f\in C^{\infty}(\Omega),$ we
have\footnote{Several inequalities of the type (\ref{pesos1}) where $\Omega$
is a $s-$John domain ($s\geq1$), $h(x)=\varrho(x)^{a}$ and $g(x)=\varrho
(x)^{b}$ with $\varrho(x)=dist(x,\partial\Omega)$ can be found in \cite{KM}
and \cite{HK}.}
\begin{equation}
\left(  \int_{\Omega}\left|  f(x)-f_{\Omega,h}\right|  ^{p}h(x)dx\right)
^{\frac{1}{p}}\leq c\int_{\Omega}\left|  \nabla f(x)\right|  g(x)dx,
\label{pesos1}%
\end{equation}
(here $f_{\Omega,h}=\int_{\Omega}f(x)h(x)dx).$ Let $d\mu(x)=h(x)dx,$ then we
can rewrite (\ref{pesos1}) as
\begin{equation}
\left(  \int_{\Omega}\left|  f(x)-f_{\Omega,h}\right|  ^{p}d\mu(x)\right)
^{\frac{1}{p}}\leq c\int_{\Omega}\left|  \nabla f(x)\right|  \frac{g(x)}%
{h(x)}d\mu(x). \label{Poinpesos}%
\end{equation}
If we denote by $f_{\mu}^{\ast}$ the decreasing rearrangement of $f$ with
respect to the measure $\mu\ $and $f_{\mu}^{\ast\ast}(t)=\frac{1}{t}\int
_{0}^{t}f_{\mu}^{\ast}(s)ds$, then with the same proof of Theorem \ref{teoA},
we see that (\ref{Poinpesos}) and the following statements are equivalent:

\begin{enumerate}
\item [(i)]There exists $r_{f}\in\mathbb{R}$ such that
\[
s^{\frac{1}{p}-1}[\left(  f-r_{f}\right)  _{\mu}^{\ast\ast}(s)-\left(
f-r_{f}\right)  _{\mu}^{\ast}(s)]ds\preceq\int_{0}^{t}\left|  \nabla f\right|
_{\mu}^{\ast}(s)ds
\]
and
\[
\int_{0}^{t}s^{\frac{1}{p}-1}[\left(  f-r_{f}\right)  _{\mu}^{\ast\ast
}(s)-\left(  f-r_{f}\right)  _{\mu}^{\ast}(s)]ds\preceq\int_{0}^{t}\left|
\nabla f\right|  _{\mu}^{\ast}(s)ds.\
\]

\item[(ii)] For any rearrangement invariant space $X$
\[
\inf_{c}\left\|  s^{\frac{1}{p}-1}[\left(  f-c\right)  _{\mu}^{\ast\ast
}(s)-\left(  f-c\right)  _{\mu}^{\ast}(s)]\right\|  _{X}\preceq\left\|  \nabla
f\right\|  _{X}.
\]

\item[(iii)]
\[
\left\|  f-f_{\Omega,h}\right\|  _{L^{p,1}(\Omega,d\mu)}\preceq\left\|  \nabla
f\right\|  _{L^{1}(\Omega,d\mu)}.
\]
\end{enumerate}
\end{remark}

\section{\label{sec4}Symmetrization and Moduli of continuity}

In this brief section we formulate versions of the P\'{o}lya-Szeg\"{o}
principle for functions on domains.

Let $\Omega\in$ $\mathcal{J}_{1-1/n}$ be a domain of finite measure (for
simplicity we assume that $\left|  \Omega\right|  =1),$ and let $X(\Omega)$ be
a r.i. space. Given $f\in X(\Omega)$ the symmetric spherical decreasing
rearrangement $f^{\circ}$ of $f$\textbf{\ }is defined by
\[
f^{\circ}(x)=f^{\ast}\left(  \gamma_{n}\left|  x\right|  ^{n}\right)  ,\text{
\ \ }x\in B,
\]
where $\gamma_{n}=$ measure of the unit ball in $\mathbb{R}^{n}$ and $B$ is
the ball centered at the origin with $\left|  B\right|  =1.$ Since $f^{\circ}$
is equimeasurable with $f,$ $\left(  f^{\circ}\right)  ^{\ast}=f^{\ast}$,
$X(\Omega)$ has also a representation as a function space on $\tilde{X}(B)$
such that
\[
\left\|  f\right\|  _{X(\Omega)}=\left\|  f^{\circ}\right\|  _{\tilde{X}(B)}.
\]
Let us also recall that\footnote{We refer the reader to \cite{Mo} for further
information about symmetric spherical rearrangement.} (see \cite{MM1}).
\[
\left\|  f^{\circ}-g^{\circ}\right\|  _{\tilde{X}(B)}\leq\left\|  f-g\right\|
_{X}\text{ \ }f,g\in X.
\]

The first result of this section is an extension of the classical
P\'{o}lya-Szeg\"{o} inequality for domains of class $\mathcal{J}_{1-1/n}.$

\begin{theorem}
\label{Tpolya}Let $\Omega\in$ $\mathcal{J}_{1-1/n}$ and $X(\Omega)$ a r.i.
space. Then for any $f\in W^{1,1}(\Omega)$ we get that
\[
\inf_{c\in\mathbb{R}}\left\|  \nabla(f-c)^{\circ}\right\|  _{\tilde{X}%
(B)}\preceq\left\|  \nabla f\right\|  _{X(\Omega)}.
\]
\end{theorem}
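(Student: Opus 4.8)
The plan is to reduce the statement to the Maz'ya-type ``$p$-isoperimetric'' information encoded in the hypothesis $\Omega\in\mathcal{J}_{1-1/n}$, and then run a Talenti-style pointwise differential inequality on the decreasing rearrangement. First I would pick, for a given $f\in W^{1,1}(\Omega)$, the median value $r_f$, i.e. a real number with $|\{f\ge r_f\}|\ge 1/2$ and $|\{f\le r_f\}|\ge 1/2$, exactly as in the proof of Theorem~\ref{perplejo}. Writing $u=(f-r_f)\chi_{\{f\ge r_f\}}$ and $v=(r_f-f)\chi_{\{f\le r_f\}}$, each of $u,v$ vanishes on at least half of $\Omega$, so the truncations $u_{t_1}^{t_2}$ (and likewise for $v$) also vanish on a set of measure $\ge 1/2$. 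For such functions Lemma~\ref{ttt} lets one replace the infimum-over-$c$ median oscillation by a plain distribution-function estimate, and then the isoperimetric inequality defining $\mathcal{J}_{1-1/n}$ (equivalently (\ref{tresa}) with $1/p=1-1/n$, i.e. $p=n/(n-1)$) applied to level sets of $u$ gives the relative isoperimetric estimate
\[
(t_2-t_1)\,\bigl|\{u\ge t_2\}\bigr|^{1-1/n}\preceq \int_{\{t_1<u\le t_2\}}|\nabla f(x)|\,dx,
\]
and similarly for $v$. Combining the $u$ and $v$ estimates through the identity $\{\beta>u+v\ge\alpha\}=\{\beta>u\ge\alpha\}\cup\{\beta>v\ge\alpha\}$ (used in Theorem~\ref{perplejo}) yields the same bound for $|f-r_f|$ in place of $u$.

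Next I would let $t_1=(f-r_f)^{\ast}(s+h)$, $t_2=(f-r_f)^{\ast}(s)$, divide by $h$, and let $h\to0$, exactly as in Talenti's argument and in \cite{MMP}. This produces the pointwise inequality
\[
s^{1-1/n}\bigl(-(f-r_f)^{\ast}\bigr)'(s)\preceq \frac{\partial}{\partial s}\int_{\{|f-r_f|>(f-r_f)^{\ast}(s)\}}|\nabla f(x)|\,dx,
\]
which after integration over $(0,t)$ gives
\[
\int_0^t s^{1-1/n}\bigl(-(f-r_f)^{\ast}\bigr)'(s)\,ds \preceq \int_0^t |\nabla f|^{\ast}(s)\,ds .
\]
(This is the $p=n/(n-1)$ instance of Theorem~\ref{perplejo}(a), with $1/p$ there replaced by $1-1/n$; I would just quote that theorem.) Now I compute the gradient of the spherical rearrangement: since $(f-r_f)^{\circ}(x)=(f-r_f)^{\ast}(\gamma_n|x|^n)$ depends only on $\rho=|x|$, the chain rule gives $|\nabla (f-r_f)^{\circ}(x)| = \gamma_n n |x|^{n-1}\,\bigl(-(f-r_f)^{\ast}\bigr)'(\gamma_n|x|^n)$. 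Passing to polar coordinates, its decreasing rearrangement on $B$ (which again depends only on the measure variable) satisfies, writing $\tau=\gamma_n|x|^n$ so that $n\gamma_n|x|^{n-1}dx \leftrightarrow$ the measure element,
\[
\bigl(|\nabla (f-r_f)^{\circ}|\bigr)^{\ast}(s) = n\gamma_n^{1/n} s^{1-1/n}\,\bigl(-(f-r_f)^{\ast}\bigr)'(s)\cdot\bigl(\text{const}\bigr)
\]
up to the normalizing constant forced by $|B|=1$; in any case $\bigl(|\nabla (f-r_f)^{\circ}|\bigr)^{\ast}(s)\simeq s^{1-1/n}\bigl(-(f-r_f)^{\ast}\bigr)'(s)$. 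Therefore the displayed integral inequality says precisely
\[
\int_0^t \bigl(|\nabla (f-r_f)^{\circ}|\bigr)^{\ast}(s)\,ds \preceq \int_0^t |\nabla f|^{\ast}(s)\,ds,\qquad \forall t\in(0,1),
\]
i.e. $|\nabla (f-r_f)^{\circ}|$ is Hardy–Littlewood–P\'olya dominated by $|\nabla f|$. By the very definition of a rearrangement invariant norm this yields $\bigl\|\nabla (f-r_f)^{\circ}\bigr\|_{\tilde X(B)}\le \bigl\|\,|\nabla f|\,\bigr\|_{X(\Omega)}$, and taking the infimum over $c$ (with $c=r_f$ an admissible choice) gives the claim.

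The step I expect to be the main obstacle is the differentiation-in-$h$ passage that turns the discrete relative-isoperimetric inequality into the pointwise differential inequality for $(f-r_f)^{\ast}$: one must justify that $(f-r_f)^{\ast}$ is locally absolutely continuous (or work with the a.e.-defined derivative and a Fubini/monotonicity argument), control the difference quotients of the truncated integrals $\int_{\{t_1<|f-r_f|\le t_2\}}|\nabla f|$, and handle the plateaus where $(f-r_f)^{\ast}$ is constant. This is exactly the technical core already carried out in Talenti \cite{Ta} and in \cite{MMP}, so in the write-up I would invoke those references rather than reprove it; the only genuinely new ingredient here is the median truncation device (Lemma~\ref{ttt}) that removes the need for $f$ to vanish on $\partial\Omega$, and the elementary computation of $\bigl(|\nabla (f-r_f)^{\circ}|\bigr)^{\ast}$ in terms of $(f-r_f)^{\ast}{}'$.
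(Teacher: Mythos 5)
There is a genuine gap, and it occurs precisely at the identification
\[
\bigl(|\nabla (f-r_f)^{\circ}|\bigr)^{\ast}(s)\simeq s^{1-1/n}\bigl(-(f-r_f)^{\ast}\bigr)'(s).
\]
What the chain rule and the measure-preserving change of variables $\tau=\gamma_n|x|^n$ actually give is that $|\nabla (f-r_f)^{\circ}|$, as a function on $B$, is \emph{equimeasurable} with the function $s\mapsto n\gamma_n^{1/n}s^{1-1/n}(-(f-r_f)^{\ast})'(s)$ on $(0,1)$. Equimeasurability means the two functions have the same decreasing rearrangement; it does \emph{not} mean that $s^{1-1/n}(-(f-r_f)^{\ast})'(s)$ is itself decreasing, and in general it is not. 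So you cannot replace $\bigl(|\nabla (f-r_f)^{\circ}|\bigr)^{\ast}(s)$ by the un-rearranged expression $s^{1-1/n}(-(f-r_f)^{\ast})'(s)$ under the integral. Consequently the inequality you invoke from Theorem~\ref{perplejo}(a),
\[
\int_0^t s^{1-1/n}\bigl(-(f-r_f)^{\ast}\bigr)'(s)\,ds \preceq \int_0^t |\nabla f|^{\ast}(s)\,ds,
\]
does \emph{not} yield $\int_0^t \bigl(|\nabla (f-r_f)^{\circ}|\bigr)^{\ast}(s)\,ds \preceq \int_0^t |\nabla f|^{\ast}(s)\,ds$: passing from $\int_0^t g$ to $\int_0^t g^{\ast}$ can only increase the left-hand side, and here that increase is exactly what must be controlled but is not.

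The paper closes this gap by establishing a strictly stronger input: a Young-function estimate
\[
\int_0^1\Phi\bigl(s^{1-1/n}(-(f-r_f)^{\ast})'(s)\bigr)\,ds\preceq\int_{\Omega}\Phi(|\nabla f(x)|)\,dx
\]
valid for \emph{every} Young function $\Phi$, obtained by a ``slight modification'' of the truncation and coarea argument behind (\ref{pol01}) (the usual Talenti/Jensen step along level sets, which produces an Orlicz estimate rather than a single $L^1$ estimate). Because Orlicz integrals depend only on the equimeasurability class, this is the same as $\int_B\Phi(|\nabla(f-r_f)^{\circ}|)\preceq\int_{\Omega}\Phi(|\nabla f|)$ for all $\Phi$, and by the Calder\'on--Hardy--Littlewood--P\'olya characterization that is equivalent to $\int_0^t\bigl(|\nabla(f-r_f)^{\circ}|\bigr)^{\ast}\preceq\int_0^t|\nabla f|^{\ast}$. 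If you want to stay closer to your route, you would need, in addition to Theorem~\ref{perplejo}(a), a pointwise bound of the form $s^{1-1/n}(-(f-r_f)^{\ast})'(s)\preceq|\nabla f|^{\ast\ast}(s)$ so that Lemma~\ref{01} applies; but that bound is not available at this stage, and the paper instead goes through the $\Phi$-estimate. Everything in your write-up before the rearrangement identification (the median choice, the $u,v$ decomposition, Lemma~\ref{ttt}, the isoperimetric step, and Talenti's differentiation-in-$h$ passage) matches the paper and is sound.
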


\begin{proof}
We argue as in \cite{MMP}. Let $f\in W^{1,1}(\Omega),$ then a slight
modification to the proof of (\ref{pol01}) above yields that there is
$r_{f}\in\mathbb{R}$ such that for any Young function $\Phi$ we have
\[
\int_{0}^{1}\Phi\left(  s^{1/p}\left(  -(f-r_{f})^{\ast}\right)  ^{^{\prime}%
}(s)\right)  ds\preceq\int_{\Omega}\Phi(\left|  \nabla f(x)\right|  )dx.
\]
Since
\[
\int_{0}^{1}\Phi\left(  s^{1-1/n}\left(  -(f-r_{f})^{\ast}\right)  ^{^{\prime
}}(s)\right)  ds\simeq\int_{B}\Phi(\left|  \nabla(f-r_{f})^{\circ}(x)\right|
)dx
\]
we obtain,
\[
\int_{0}^{t}(\left|  \nabla(f-r_{f})^{\circ}\right|  )^{\ast}(s)ds\preceq
\int_{0}^{t}\left|  \nabla f\right|  ^{\ast}(s)ds.
\]
Summarizing, we get
\[
\inf_{c\in\mathbb{R}}\left\|  \nabla(f-c)^{\circ}\right\|  _{\tilde{X}(B)}%
\leq\left\|  \nabla(f-r_{f})^{\circ}\right\|  _{\tilde{X}(B)}\preceq\left\|
\nabla f\right\|  _{X(\Omega)}.
\]
\end{proof}

Given $f\in X\left(  {\Omega}\right)  ,$ the $X(\Omega)-$modulus of continuity
of $f$ is defined by
\begin{equation}
\omega_{X}(f,t)=\sup_{0<\left|  h\right|  \leq t}\left\|  (f(\cdot
+h)-f(\cdot))\chi_{\Omega(h)}\right\|  _{X(\Omega)},\label{modulo}%
\end{equation}
with $\Omega(h)=\left\{  x\in\Omega:x+\rho h\in\Omega,\text{ }0\leq\rho
\leq1\right\}  $, $h\in\mathbb{R}^{n}$.

Let $W_{X}^{1}=W_{X}^{1}(\Omega)=\{f\in X(\Omega):\nabla f\in X(\Omega)\}.$
Then, using the previous result, the fact that (cf. \cite{hs})
\[
\inf_{g\in W_{X}^{1}}\{\left\|  f-g\right\|  _{X(\Omega)}+t\left\|  \nabla
g\right\|  _{X(\Omega)}\}\simeq\omega_{X}(t,h),
\]
together with the proof of Theorem 1 in \cite{MM1}, we obtain

\begin{theorem}
\label{Tmodulo}Let $\Omega$ be an open domain in $\mathbb{R}^{n}$ with
Lipschitz boundary and such that $\left|  \Omega\right|  =1.$ Let $X\left(
{\Omega}\right)  $ a r.i. space, and let $f\in X\left(  {\Omega}\right)  .$
Then
\[
\inf_{c\in\mathbb{R}}\omega_{\tilde{X}(B)}(\left(  f-c\right)  ^{\circ
},t)\preceq\omega_{X(\Omega)}(f,t)\text{ \ \ \ }0<t<1.
\]
\end{theorem}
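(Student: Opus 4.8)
The plan is to deduce Theorem~\ref{Tmodulo} from Theorem~\ref{Tpolya} by the same $K$-functional argument used in \cite{MM1}. The starting point is the identification (cf. \cite{hs}), valid on a Lipschitz domain,
\[
\omega_{X(\Omega)}(f,t)\simeq K(t,f;X(\Omega),W_X^1(\Omega))=\inf_{g\in W_X^1(\Omega)}\{\|f-g\|_{X(\Omega)}+t\|\nabla g\|_{X(\Omega)}\},
\]
and the corresponding statement on the ball $B$, namely $\omega_{\tilde X(B)}(u,t)\simeq\inf_{h\in W^1_{\tilde X}(B)}\{\|u-h\|_{\tilde X(B)}+t\|\nabla h\|_{\tilde X(B)}\}$. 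First I would fix $f\in X(\Omega)$ and, given $t\in(0,1)$ and $\varepsilon>0$, choose a near-optimal $g\in W_X^1(\Omega)$ for the $K$-functional, so that $\|f-g\|_{X(\Omega)}+t\|\nabla g\|_{X(\Omega)}\le(1+\varepsilon)\,\omega_{X(\Omega)}(f,t)$.

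Next I would transplant this decomposition to the ball. The key inputs are: (a) the contraction property of spherical symmetric rearrangement recalled just before Theorem~\ref{Tpolya}, $\|u^\circ-v^\circ\|_{\tilde X(B)}\le\|u-v\|_{X(\Omega)}$, which I apply to $u=f-c$ and $v=g-c$ (for a constant $c$ to be chosen) to get $\|(f-c)^\circ-(g-c)^\circ\|_{\tilde X(B)}\le\|f-g\|_{X(\Omega)}$; and (b) Theorem~\ref{Tpolya} applied to $g$, which provides a constant $r_g\in\mathbb{R}$ with $\|\nabla(g-r_g)^\circ\|_{\tilde X(B)}\preceq\|\nabla g\|_{X(\Omega)}$. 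Taking $c=r_g$ and setting $h=(g-r_g)^\circ\in W^1_{\tilde X}(B)$, the pair $\big((f-r_g)^\circ,\,h\big)$ is an admissible competitor in the $K$-functional on $B$, so
\[
K\big(t,(f-r_g)^\circ;\tilde X(B),W^1_{\tilde X}(B)\big)\le\|(f-r_g)^\circ-h\|_{\tilde X(B)}+t\|\nabla h\|_{\tilde X(B)}\preceq\|f-g\|_{X(\Omega)}+t\|\nabla g\|_{X(\Omega)}.
\]
Taking the infimum over $c\in\mathbb{R}$ on the left and using the equivalence of the $K$-functional with the modulus of continuity on both sides, then letting $\varepsilon\to0$, yields
\[
\inf_{c\in\mathbb{R}}\omega_{\tilde X(B)}\big((f-c)^\circ,t\big)\preceq\omega_{X(\Omega)}(f,t),
\]
which is the assertion.

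The main obstacle, and the point that needs care rather than being purely routine, is the legitimacy of the $K$-functional description of $\omega$ in the two settings and the compatibility of the "$\inf_c$" with this machinery. On the domain side one must know that $\Omega$ being Lipschitz guarantees $W_X^1(\Omega)$ is a genuine interpolation endpoint for which \cite{hs} applies; on the ball side one uses that $B$ is itself Lipschitz so the same equivalence holds for $\tilde X(B)$. One also has to check that replacing $f$ by $f-c$ does not disturb the gradient terms (it doesn't, since $\nabla(f-c)=\nabla f$ and $\nabla(g-c)=\nabla g$) and that the constant $r_g$ supplied by Theorem~\ref{Tpolya} depends only on $g$, so that it is a legitimate choice of $c$ in the outer infimum. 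Once these identifications are in place the argument is the transcription of \cite{MM1}, Theorem~1, with Theorem~\ref{Tpolya} replacing the classical Pólya–Szegő inequality, and the $\inf_{c\in\mathbb{R}}$ simply propagated through each inequality.
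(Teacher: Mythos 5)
Your proposal is correct and follows essentially the same route as the paper: the paper also deduces Theorem~\ref{Tmodulo} from Theorem~\ref{Tpolya} via the $K$-functional characterization $\omega_{X}(f,t)\simeq K(t,f;X(\Omega),W_X^1(\Omega))$ of \cite{hs} and the contraction property of $f\mapsto f^\circ$, citing the proof of Theorem~1 in \cite{MM1} for the transplantation argument that you spell out in detail (near-optimal $g$, the choice $c=r_g$ from Theorem~\ref{Tpolya}, and the competitor $(g-r_g)^\circ$ on the ball).
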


\begin{corollary}
Let $\Omega$ be an open domain in $\mathbb{R}^{n}$ with Lipschitz boundary and
such that $\left|  \Omega\right|  =1.$ Let $X\left(  {\Omega}\right)  $ a r.i.
space, and $f\in X\left(  {\Omega}\right)  .$ Then
\[
\inf_{c\in\mathbb{R}}\left(  \left(  f-c\right)  ^{\ast\ast}(t)-\left(
f-c\right)  ^{\ast}(t)\right)  \preceq\frac{\omega_{X(\Omega)}\left(
t^{1/n},f\right)  }{\phi_{X}(t)},\text{ \ \ \ }0<t<1/2,
\]
where $\phi_{X}(s)$ is the fundamental function\textbf{\ }of $X(\Omega):$
$\phi_{X}(s)=\left\|  \chi_{E}\right\|  _{X},$ with $E$ any measurable subset
of $\Omega$ with $\left|  E\right|  =s.$
\end{corollary}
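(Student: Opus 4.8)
The plan is to deduce the Corollary from Theorem~\ref{Tmodulo} by bounding the quantity $\left(f-c\right)^{\ast\ast}(t)-\left(f-c\right)^{\ast}(t)$ in terms of the $\tilde X(B)$-modulus of continuity of the radial rearrangement $(f-c)^{\circ}$, and then invoking Theorem~\ref{Tmodulo} to pass back to $\omega_{X(\Omega)}(f,\cdot)$. The key observation is that for a radially decreasing function $g^{\circ}$ on the unit ball $B$, a translation by a vector of length comparable to the radius at which the level set of measure $t$ sits, namely $|h|\simeq (t/\gamma_n)^{1/n}$, moves a substantial chunk of mass; quantitatively one expects
\[
g^{\ast\ast}(t)-g^{\ast}(t)\;\preceq\;\frac{1}{\phi_{X}(t)}\,\bigl\|(g^{\circ}(\cdot+h)-g^{\circ}(\cdot))\chi_{B(h)}\bigr\|_{\tilde X(B)}
\]
for a suitable $h$ with $|h|\simeq t^{1/n}$, hence $g^{\ast\ast}(t)-g^{\ast}(t)\preceq \omega_{\tilde X(B)}(g^{\circ},t^{1/n})/\phi_{X}(t)$. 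Applying this with $g=f-c$ and using that $(f-c)^{\ast\ast}(t)-(f-c)^{\ast}(t)$ is unchanged by passing to the rearrangement, the Corollary follows by taking the infimum over $c$ and using Theorem~\ref{Tmodulo}, since $\phi_{\tilde X(B)}=\phi_{X}$.

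First I would make the modulus-to-$(f^{\ast\ast}-f^{\ast})$ estimate precise. Fix $0<t<1/2$ and set $\rho=(t/\gamma_n)^{1/n}$, so that the ball $\{|x|\le\rho\}$ has measure $t$. Choose $h$ with $|h|=\rho$ (so $|h|\simeq t^{1/n}$) pointing in any fixed direction. On the half of $B$ that is "pushed inward" by the translation, $g^{\circ}(x+h)-g^{\circ}(x)$ has a sign and, on a set of measure $\succeq t$, has absolute value at least $g^{\ast}(t/C)-g^{\ast}(Ct)$ for an absolute constant $C$; integrating a two-level decreasing rearrangement argument (replace $g^{\circ}$ by its truncation between the levels $g^{\ast}(Ct)$ and $g^{\ast}(t/C)$) and using the lattice property of the r.i. norm together with the definition of $\phi_{X}$ one gets
\[
\bigl(g^{\ast}(t/C)-g^{\ast}(Ct)\bigr)\,\phi_{X}(t)\;\preceq\;\bigl\|(g^{\circ}(\cdot+h)-g^{\circ}(\cdot))\chi_{B(h)}\bigr\|_{\tilde X(B)}\;\le\;\omega_{\tilde X(B)}(g^{\circ},t^{1/n}).
\]
Finally one upgrades $g^{\ast}(t/C)-g^{\ast}(Ct)$ to $g^{\ast\ast}(t)-g^{\ast}(t)$ using the standard elementary comparison $g^{\ast\ast}(t)-g^{\ast}(t)\simeq \sup_{0<s\le t}\bigl(g^{\ast}(s)-g^{\ast}(t)\bigr)\frac{s}{t}$ and summing a geometric series in dyadic scales (each dyadic increment $g^{\ast}(2^{-k-1}t)-g^{\ast}(2^{-k}t)$ is controlled, via the same argument applied at scale $2^{-k}t$, by $\omega_{\tilde X(B)}(g^{\circ},(2^{-k}t)^{1/n})/\phi_{X}(2^{-k}t)$, and then one uses monotonicity of $\omega$ together with the lower bound $\phi_{X}(2^{-k}t)\succeq 2^{-k}\phi_{X}(t)$ valid since $s/\phi_X(s)$ is nondecreasing).

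I expect the main obstacle to be the last geometric-summation step: one must control $(f-c)^{\ast\ast}(t)-(f-c)^{\ast}(t)$, which involves \emph{all} scales $s\le t$, by the modulus at the \emph{single} scale $t^{1/n}$ divided by $\phi_X(t)$. The mechanism is that $\omega_{\tilde X(B)}(g^{\circ},\cdot)$ is nondecreasing so $\omega_{\tilde X(B)}(g^{\circ},(2^{-k}t)^{1/n})\le\omega_{\tilde X(B)}(g^{\circ},t^{1/n})$, while the denominators $\phi_X(2^{-k}t)$ only \emph{decrease} like $2^{-k}$ at worst, so $\sum_k (g^{\ast}(2^{-k-1}t)-g^{\ast}(2^{-k}t))\cdot 2^{-k}$ is dominated by $\omega_{\tilde X(B)}(g^{\circ},t^{1/n})\sum_k 2^{-k}/\phi_X(2^{-k}t)$; since $2^{-k}/\phi_X(2^{-k}t)\le C/\phi_X(t)$ this sum is $\preceq\omega_{\tilde X(B)}(g^{\circ},t^{1/n})/\phi_X(t)$. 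Once this is in hand, applying Theorem~\ref{Tmodulo} with the infimum over $c$ and noting $\phi_{\tilde X(B)}=\phi_X$ (the fundamental functions of a space and its representations agree) completes the argument.
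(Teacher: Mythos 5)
Your overall strategy is the same as the paper's: invoke Theorem~\ref{Tmodulo} to pass from $\omega_{X(\Omega)}(f,\cdot)$ to $\omega_{\tilde X(B)}((f-c)^{\circ},\cdot)$, reducing the Corollary to the estimate
\[
(f-c)^{\ast\ast}(t)-(f-c)^{\ast}(t)\preceq \frac{\omega_{\tilde X(B)}((f-c)^{\circ},t^{1/n})}{\phi_X(t)},\qquad 0<t<1/2.
\]
At this point the paper simply cites Theorem~2 of \cite{MM1}, whereas you attempt a direct proof. The direct proof has a genuine gap in the final summation step. After bounding each dyadic increment by
\[
g^{\ast}(2^{-k-1}t)-g^{\ast}(2^{-k}t)\ \preceq\ \frac{\omega_{\tilde X(B)}(g^{\circ},t^{1/n})}{\phi_X(2^{-k}t)},
\]
you need to sum $\sum_{k\ge0}2^{-k}\bigl(g^{\ast}(2^{-k-1}t)-g^{\ast}(2^{-k}t)\bigr)\le \omega\,\sum_{k\ge0}\frac{2^{-k}}{\phi_X(2^{-k}t)}$, and you conclude that the series is $\preceq 1/\phi_X(t)$ because each term is $\le 1/\phi_X(t)$ (which is indeed true, since $s/\phi_X(s)$ is nondecreasing). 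But a uniform term-by-term bound does not control the sum: for $X=L^{1}$, $\phi_X(s)=s$, so every term equals $1/t$ and the series diverges, while the Corollary itself is perfectly true for $X=L^{1}$. The series $\sum_k 2^{-k}/\phi_X(2^{-k}t)$ converges only under a positivity assumption on the lower dilation index of $\phi_X$, a hypothesis the statement does not make. A related soft spot is the claimed ``standard elementary comparison'' $g^{\ast\ast}(t)-g^{\ast}(t)\simeq\sup_{0<s\le t}(g^{\ast}(s)-g^{\ast}(t))\tfrac{s}{t}$: one inequality holds, but the reverse fails with uniform constants (test $g^{\ast}(s)=s^{-\alpha}$ and let $\alpha\uparrow1$), so even granting the single-scale oscillation bound you cannot upgrade it this way. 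To obtain the estimate without index restrictions one has to argue as in Theorem~2 of \cite{MM1}, via the $K$-functional/near-optimal-decomposition characterization of $\omega_{\tilde X(B)}$ together with the pointwise inequality $u^{\ast\ast}(t)-u^{\ast}(t)\preceq t^{1/n}|\nabla u|^{\ast\ast}(t)$ for the smooth approximant, rather than by dyadic stacking of single-scale oscillations.
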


\begin{proof}
By the previous theorem and since $\left(  \left(  f-c\right)  ^{\circ
}\right)  ^{\ast}=\left(  f-c\right)  ^{\ast}$ it is enough to check that for
any $c\in\mathbb{R}$%
\[
\left(  f-c\right)  ^{\ast\ast}(t)-\left(  f-c\right)  ^{\ast}(t)\preceq
\frac{\omega_{\tilde{X}(B)}(\left(  f-c\right)  ^{\circ},t^{1/n})}{\phi
_{X}(t)}\text{ \ \ \ }0<t<1/2,
\]
and this follows easily from Theorem 2 of \cite{MM1}.
\end{proof}

\section{Appendix}

In this section for completeness sake we provide a proof of Lemma \ref{01}. In
fact, the proof given below is implicitly contained in the proof of Theorem
1.2 of \cite{Ci}.

\begin{proof}
The main step is to show that for every finite family of intervals $\left(
a_{i},b_{i}\right)  ,$ $i=1,\ldots,m$, with $0<a_{1}<b_{1}\leq a_{2}<b_{2}%
\leq\cdots\leq a_{m}<b_{m}<\infty,$ there is a constant $c$ such that
\begin{equation}
\sum_{i=1}^{n}\int_{a_{i}}^{b_{i}}g(s)ds\leq c\int_{0}^{\sum_{i=1}^{n}%
(b_{i}-a_{i})}h^{\ast}(s)ds, \label{pas1}%
\end{equation}

If (\ref{pas1}) holds then by a routine limiting process we can show that for
any measurable set $E\subset$ $(0,\infty),$ we have
\[
\int_{E}g(s)\leq c\int_{0}^{|E|}h^{\ast}(s)ds,
\]
and the desired inequality follows:%
\[
\int_{0}^{t}g^{\ast}(s)\leq c\int_{0}^{t}h^{\ast}(s)ds,t>0.
\]

It remains to prove (\ref{pas1}). Fix $j$ $\in\left\{  1,\ldots,m\right\}  ,$
then by (\ref{h1})
\begin{align}
\sum_{i\geq j}\int_{a_{i}}^{b_{i}}g(s)  &  \leq c\int_{0}^{\infty}\chi
_{\cup_{i\geq j}(a_{i},b_{i})}(r)\left(  \frac{1}{r}\int_{0}^{r}h^{\ast
}(s)ds\right)  dr\label{utz}\\
&  =c\int_{0}^{\infty}h^{\ast}(s)\left(  \int_{s}^{\infty}\chi_{\cup_{i\geq
j}(a_{i},b_{i})}(r)\frac{dr}{r}\right)  ds.\nonumber
\end{align}
Since for $R\geq0$ we have (see \cite[formula (3.37) pag. 63 ]{Ci})
\[
\int_{0}^{R}\left(  \int_{s}^{\infty}\chi_{\cup_{i\geq j}(a_{i},b_{i}%
)}(r)\frac{dr}{r}\right)  ds\leq\left(  1+\sum_{i\geq j}\log\left(
\frac{b_{i}}{a_{i}}\right)  \right)  \int_{0}^{R}\chi_{\left[  0,\sum_{i\geq
j}(b_{i}-a_{i})\right]  }(s)ds
\]
by Hardy's Lemma (see \cite[Proposition 3.6 pag 63]{BS}) it follows that
\begin{align*}
&  \int_{0}^{\infty}h^{\ast}(s)\left(  \int_{s}^{\infty}\chi_{\cup_{i\geq
j}(a_{i},b_{i})}(r)\frac{dr}{r}\right)  ds\\
&  \leq\left(  1+\sum_{i\geq j}\log\left(  \frac{b_{i}}{a_{i}}\right)
\right)  \int_{0}^{\infty}h^{\ast}(s)\chi_{\left[  0,\sum_{i\geq j}%
(b_{i}-a_{i})\right]  }(s)ds,
\end{align*}
which combined with (\ref{utz}) gives
\begin{equation}
\sum_{i\geq j}\int_{a_{i}}^{b_{i}}g(s)ds\leq c\left(  1+\sum_{i\geq j}%
\log\left(  \frac{b_{i}}{a_{i}}\right)  \right)  \int_{0}^{\sum_{i\geq
j}(b_{i}-a_{i})}h^{\ast}(s)ds. \label{dd}%
\end{equation}
If $\sum_{i\geq j}\log\left(  \frac{b_{i}}{a_{i}}\right)  \leq1$, then
(\ref{pas1}) follows simply by choosing $j=1.$ If $\sum_{i=1}^{m}\log\left(
\frac{b_{i}}{a_{i}}\right)  >1,$ it is easily seen that there exist and index
$j_{0}$ and a positive number $c_{j_{0}\text{ }}$such that $a_{j_{0}\text{ }%
}\leq c_{j_{0}\text{ }}\leq b_{j_{0}\text{ }}$ and
\begin{equation}
1<\log\left(  \frac{b_{j_{0}\text{ }}}{c_{j_{0}\text{ }}}\right)
+\sum_{i>j_{0}}\log\left(  \frac{b_{i}}{a_{i}}\right)  \leq2. \label{dd1}%
\end{equation}
Applying (\ref{dd}) with $j=j_{0}$ replacing $a_{j_{0}\text{ }}$by
$c_{j_{0}\text{ }},$ we get
\begin{align*}
&  \int_{c_{j_{0}\text{ }}}^{b_{j_{0}}}g(s)ds+\sum_{i>j_{0}}\int_{a_{i}%
}^{b_{i}}g(s)ds\\
&  \leq c\left(  1+\log\left(  \frac{b_{j_{0}\text{ }}}{c_{j_{0}\text{ }}%
}\right)  +\sum_{i>j_{0}}\log\left(  \frac{b_{i}}{a_{i}}\right)  \right)
\int_{0}^{(b_{j_{0}\text{ }}-c_{j_{0}\text{ }})+\sum_{i>j_{0}}(b_{i}-a_{i}%
)}h^{\ast}(s)ds\\
&  \leq3c\int_{0}^{(b_{j_{0}\text{ }}-c_{j_{0}\text{ }})+\sum_{i>j_{0}}%
(b_{i}-a_{i})}h^{\ast}(s)ds\text{ \ \ (by (\ref{dd1})).}%
\end{align*}
On the other hand
\begin{align*}
\log\left(  \frac{b_{j_{0}\text{ }}}{c_{j_{0}\text{ }}}\right)  +\sum
_{i>j_{0}}\log\left(  \frac{b_{i}}{a_{i}}\right)   &  =\int_{c_{j_{0}}%
}^{\infty}\chi_{\left(  (c_{j_{0}},b_{j_{0}\text{ }})\cup\left(  \cup
_{i>j_{0}}(a_{i},b_{i})\right)  \right)  }(s)\frac{ds}{s}\\
&  \leq\frac{1}{c_{j_{0}\text{ }}}\left(  (b_{j_{0}\text{ }}-c_{j_{0}\text{ }%
})+\sum_{i>j_{0}}(b_{i}-a_{i})\right)  ,
\end{align*}
and since by (\ref{dd1})
\begin{equation}
c_{j_{0}\text{ }}<\left(  (b_{j_{0}\text{ }}-c_{j_{0}\text{ }})+\sum_{i>j_{0}%
}(b_{i}-a_{i})\right)  , \label{aa1}%
\end{equation}
it follows that
\begin{align*}
&  \sum_{1\leq j<j_{0}}\int_{a_{i}}^{b_{i}}g(s)ds+\int_{a_{j_{0}}}^{c_{j_{0}}%
}g(s)ds\\
&  \leq\int_{0}^{c_{j_{0}}}g(s)ds\leq c\int_{0}^{c_{j_{0}}}h^{\ast}(s)ds\text{
\ (by (\ref{h2}))}\\
&  \leq c\int_{0}^{(b_{j_{0}\text{ }}-c_{j_{0}\text{ }})+\sum_{i>j_{0}}%
(b_{i}-a_{i})}h^{\ast}(s)ds\text{ \ (by (\ref{aa1})). \ }%
\end{align*}
Summarizing
\begin{align*}
\sum_{i=1}^{n}\int_{a_{i}}^{b_{i}}g(s)ds  &  \leq\sum_{1\leq j<j_{0}}%
\int_{a_{i}}^{b_{i}}g(s)ds+\int_{a_{j_{0}}}^{c_{j_{0}}}g(s)ds+\int
_{c_{j_{0}\text{ }}}^{b_{j_{0}}}g(s)ds+\sum_{i>j_{0}}\int_{a_{i}}^{b_{i}%
}g(s)ds\\
&  \leq4c\int_{0}^{(b_{j_{0}\text{ }}-c_{j_{0}})+\sum_{i>j_{0}}(b_{i}-a_{i}%
)}h^{\ast}(s)ds\\
&  \leq4c\int_{0}^{\sum_{i=1}^{n}(b_{i}-a_{i})}h^{\ast}(s)ds.
\end{align*}
\end{proof}

\end{document}